\let\doendproof\endproof
\renewcommand\endproof{\hfill\qed\doendproof}
\crefname{cond}{Condition}{Conditions}
\newcommand{\calG}{\ensuremath{\mathcal{G}}\xspace}
\newcommand{\calQ}{\ensuremath{\mathcal{Q}}\xspace}
\DeclareMathOperator{\mad}{mad}
\DeclareMathOperator{\spans}{span}
\DeclareMathOperator{\qn}{qn}
\DeclareMathOperator{\pn}{pn}
\DeclareMathOperator{\uqn}{qn_u}
\DeclareMathOperator{\lqn}{qn_\ell}
\DeclareMathOperator{\lpn}{pn_\ell}
\DeclareMathOperator{\arb}{arb}
\DeclareMathOperator{\sa}{sa}
\renewcommand{\preceq}{\preccurlyeq}
\renewcommand{\leq}{\leqslant}
\renewcommand{\geq}{\geqslant}
\newcommand{\game}[1]{\text{Game}~\labelcref{#1}\xspace}
\newcommand{\gameRed}[2]{\subsubsection{\game{#1} $ \boldsymbol{\rightsquigarrow} $ \game{#2}.}}
\newcommand{\cinit}{\ensuremath{C_\text{init}}\xspace}
\newcommand{\nameColorA}{black\xspace}
\newcommand{\nameColorB}{orange\xspace}
\newcommand{\nameColorC}{blue\xspace}
\DeclareMathOperator{\opColorA}{\nameColorA}
\DeclareMathOperator{\opColorB}{\nameColorB}
\DeclareMathOperator{\opColorC}{\nameColorC}
\newcommand{\qa}{\ensuremath{Q_{\opColorA}}\xspace}
\newcommand{\qb}{\ensuremath{Q_{\opColorB}}\xspace}
\newcommand{\qc}{\ensuremath{Q_{\opColorC}}\xspace}
\begin{document}
\title{The Local Queue Number of Graphs with Bounded Treewidth}
\author{Laura Merker \and Torsten Ueckerdt}
\authorrunning{L.~Merker and T.~Ueckerdt}
\institute{
    Karlsruhe Institute of Technology (KIT), Institute of Theoretical Informatics, Germany\\
    \email{laura.merker@student.kit.edu, torsten.ueckerdt@kit.edu}
}
\maketitle 
\begin{abstract}
    A queue layout of a graph $G$ consists of a vertex ordering of $G$ and a partition of the edges into so-called queues such that no two edges in the same queue nest, i.e., have their endpoints ordered in an ABBA-pattern.
    Continuing the research on local ordered covering numbers, we introduce the local queue number of a graph $G$ as the minimum $\ell$ such that $G$ admits a queue layout with each vertex having incident edges in no more than $\ell$ queues.
    Similarly to the local page number [Merker, Ueckerdt, GD'19], the local queue number is closely related to the graph's density and can be arbitrarily far from the classical queue number.

    We present tools to bound the local queue number of graphs from above and below, focusing on graphs of treewidth $k$.
    Using these, we show that every graph of treewidth $k$ has local queue number at most $k+1$ and that this bound is tight for $k=2$, while a general lower bound is $\lceil k/2\rceil+1$.
    Our results imply, inter alia, that the maximum local queue number among planar graphs is either 3 or 4.
    
    \keywords{Queue number \and Local covering number \and Treewidth.}
\end{abstract}

\section{Introduction}
\label{sec:intro}

Given a graph, we aim to find a vertex ordering $ \prec $ and a partition of the edges into queues, where two edges $ uv $ and $ xy $ may not be in the same queue if $ u \prec x \prec y \prec v $.
Since Heath and Rosenberg~\cite{1queue} introduced this concept in 1992, one of the main concerns of studying queue layouts is the investigation of the maximum queue number of the class of planar graphs and the class of graphs with bounded treewidth, see for instance~\cite{treewidth,treewidthExp,2trees,planar3TreeQN,compQS,planarBoundedQN}.
Despite recent breakthroughs, there are still large gaps between lower and upper bounds on the maximum queue number of both graph classes.
In particular, the maximum queue number of planar graphs is between $ 4 $ and $ 49 $ due to Alam et~al.~\cite{planar3TreeQN}, respectively Dujmovi\'c et~al.~\cite{planarBoundedQN}, and Wiechert~\cite{treewidthExp} provides a linear lower bound and an exponential upper bound on the maximum queue number of graphs with treewidth $ k $.
We continue the research in this direction by proposing a new graph parameter, the \emph{local queue number}, that minimizes the number of queues in which any one vertex has incident edges.
Compared to the classical queue number, the investigation of the local queue number leads to stronger lower bounds and weaker upper bounds.
The latter might offer a way to support conjectured upper bounds on the classical queue number.
We remark that analogously to the local queue number considered here, we recently introduced~\cite{localPageNumbers} the \emph{local page number} as a weaker version of the classical page number.

All necessary definitions are given in \cref{sec:definitions}, including the formal definition of local queue numbers.
In \cref{sec:relatedWork}, we briefly locate local queue numbers in the general covering number framework, and outline the state of the art on queue numbers and local page numbers of planar graphs and graphs with bounded treewidth.
We summarize our results in \cref{sec:contribution} and point out which results on local page numbers immediately generalize to local queue numbers.
We then investigate the local queue number of $ k $-trees in \cref{sec:ktrees}. 
Finally, we discuss possible applications of the presented tools and propose open problems for further research in \cref{sec:conclusions}.

\subsection{Definitions}
\label{sec:definitions}

Consider a graph $ G $ with a linear ordering $ \prec $ of its vertex set.
The sets $ V(G) $ and $ E(G) $ denote the vertex set, respectively edge set, of $ G $.
For subsets $ X, Y \subseteq V(G)$, we write $ X \prec Y $ and say $ X $ is \emph{to the left} of $ Y $ and $ Y $ is \emph{to the right} of $ X $ if $ x \prec y $ for all vertices $ x \in X, y \in Y $.
If the sets consist only of a single vertex, we use $ x $ instead of $ \{ x \} $.
Let the \emph{span} of $ X $ contain all vertices lying between the leftmost and the rightmost vertex of $ X $, that is $ \spans(X) = \{v \in V(G) \colon \exists \, x, x' \in X \text{ with } x \preceq v \preceq x'\} $.
For a subgraph $ H $ of $ G $ and a vertex $ v \not\in V(H) $, we say $ v $ is \emph{below} $ H $ if $ v \in \spans(V(H)) $ and we say $ v $ is \emph{outside} $ H $ otherwise.

\begin{figure}
    \begin{center}
        \includegraphics{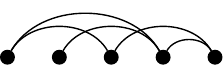}\hfill
        \includegraphics{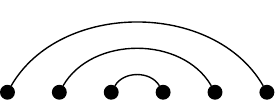}\hfill
        \includegraphics{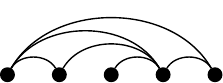}\hfill
        \includegraphics{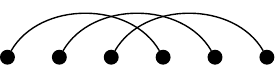}
    \end{center}
    \caption{Left to right: $ 1 $-queue layout, $ 3 $-rainbow, $ 1 $-page book embedding, $ 3 $-twist.}
    \label{fig:queue}
    \label{fig:page}
\end{figure}

Two edges $ uv, xy \in E(G) $ \emph{nest} if $ u \prec x \prec y \prec v $ or $ x \prec u \prec v \prec y $, and they \emph{cross} if $ u \prec x \prec v \prec y $ or $ x \prec u \prec y \prec v $.
A set of $ k $ pairwise nesting (crossing) edges is called a \emph{$ k $-rainbow} (\emph{$ k $-twist}).
A \emph{queue} (\emph{page}) is an edge set in which no two edges nest (cross), see \cref{fig:queue,fig:page}.
A \emph{$ k $-queue layout} (\emph{$ k $-page book embedding}) of $ G $ consists of a vertex ordering $ \prec $ and a partition of the edges of $ G $ into $ k $ queues (pages).
Finally, the \emph{queue number} $ \qn(G) $ (\emph{page number} $ \pn(G) $, also known as \emph{stack number} or \emph{book thickness}) of a graph $ G $ is the smallest $ k $ such that there is a $ k $-queue layout ($ k $-page book embedding) for $ G $.
Both concepts are called \emph{ordered covering numbers} as a partition of edges can also be considered as covering the graph with queues or pages, respectively.

We now define local variants of the parameters defined above.
For this, we allow partitions of arbitrary size but minimize the number of parts at every vertex.
An \emph{$ \ell $-local queue layout} (\emph{$ \ell $-local book embedding}) is one in which every vertex has incident edges in at most $ \ell $ queues (pages).
The \emph{local queue number} $ \lqn(G) $ (\emph{local page number} $ \lpn(G) $) is the smallest $ \ell $ for which there is an $ \ell $-local queue layout for $ G $.
Note that we have $ \lqn(G) \leq \qn(G) $ and $ \lpn(G) \leq \pn(G) $ as layouts of size $ \ell $ are also $ \ell $-local.

Finally, a \emph{$ k $-tree} is a $ (k + 1) $-clique or is obtained from a smaller $ k $-tree by choosing a clique $ C $ of size $ k $ and adding a new vertex $ u $ which is adjacent to all vertices of $ C $.
Fixing an arbitrary construction ordering, the vertex $ u $ is called a \emph{child} of $ C $, and $ C $ is called the \emph{parent clique} of $ u $.
We also say $ u $ is a child of each vertex of $ C $.
A child is called \emph{nesting} with respect to a vertex ordering if it is placed below its parent clique, and \emph{non-nesting} otherwise.
Note that $ k $-trees are exactly the maximal graphs with treewidth $ k $.
As local queue and page numbers are monotone, it suffices for us to investigate $ k $-trees, instead of arbitrary graphs of treewidth $k$.

\subsection{Related Work and Motivation}
\label{sec:relatedWork}

The notion of local ordered covering numbers unifies the concepts of local covering numbers and ordered covering numbers.
The first was introduced by Knauer and Ueckerdt~\cite{cover}, while existing research on the latter focuses on queue numbers and page numbers, which were established by Bernhart and Kainen~\cite{bookThickness} and Heath and Rosenberg~\cite{1queue}, respectively.

We first give a brief overview of global and local covering numbers as introduced in~\cite{cover}.
Consider a class of graphs $ \calG $, called \emph{guest class}, and an input graph $ H $.
We say the graph $ H $ is \emph{covered} by some covering graphs $ G_1, \ldots, G_t \in \calG $ if $ G_i $ is a subgraph of $ H $ for each $ i $ and every edge of $ H $ is contained in some covering graph, i.e.\ if $ G_1 \cup \cdots \cup G_t = H $.
The set of covering graphs is called an \emph{injective \calG-cover} of $ H $.
The \emph{global covering number} is the minimum number of covering graphs needed to cover a graph $ H $, that is the size of the smallest injective \calG-cover of $ H $.
For the \emph{local covering number}, we use \calG-covers of arbitrary size and minimize the number of covering graphs at every vertex.
For this, we say a \calG-cover for a graph $ H $ is \emph{$ \ell $-local} if every vertex is contained in at most $ \ell $ covering graphs.
Now, the \emph{local covering number} of a graph $ H $ with guest class \calG is defined as the smallest $ \ell $ such that there is an $ \ell $-local injective $ \calG $-cover of $ H $.

Many known graph parameters are covering numbers.
For instance, the thickness and outerthickness are global covering numbers for the guest classes of planar and outerplanar graphs, respectively~\cite{thicknessSurvey,outerthickness}.
In addition, all kinds of arboricity are global covering numbers for the guest class of the respective forests~\cite{arboricity,linearArboricity,caterpillarArboricity,starArboricity}.
The local covering number was considered for the guest classes of complete bipartite graphs~\cite{localBipartiteFishburn}, complete graphs~\cite{localClique}, and different forests~\cite{cover}.

We continue by summarizing known results on the queue number and local page number of planar graphs and graphs with bounded treewidth.
While every $ 1 $-queue graph is planar~\cite{1queue}, the maximum queue number among all outerplanar graphs is $ 2 $~\cite{compQS} and among all planar graphs it is between $ 4 $ and $ 49 $~\cite{planar3TreeQN,planarBoundedQN}.
The lower bound of $ 4 $ is obtained by a planar 3-tree.
Alam et~al.~\cite{planar3TreeQN} also show that every planar 3-tree admits a 5-queue layout.
Trees have queue number $ 1 $ using a BFS-ordering~\cite{1queue}, and BFS-orderings proved also useful for queue layouts of planar graphs~\cite{planarBoundedQN}, outerplanar graphs~\cite{compQS}, and graphs with bounded treewidth~\cite{treewidthExp}.
Rengarajan and Veni Madhavan~\cite{2trees} prove that every 2-tree admits a 3-queue layout, while Wiechert~\cite{treewidthExp} proves that this bound is tight.
More general, there is a graph with treewidth $ k $ and queue number at least $ k + 1 $ for each $ k > 1 $, while the best known upper bound is $ 2^k - 1 $~\cite{treewidthExp}.

The local version of page numbers was introduced and investigated in~\cite{localPageNumbers}.
The local page number of any graph is always near its maximum average degree, while the classical page number can be arbitrarily far off:
For any $ d \geq 3 $, there are $ n $-vertex graphs with local page number at most $ d + 2 $ but page number $ \Omega(\sqrt{d}n^{1/2 - 1/d}) $.
The maximum local page number for $k$-trees is at least $ k $ and at most $ k + 1 $, and for planar graphs it is either $ 3 $ or $ 4 $.

Our main motivation for defining local ordered covering numbers is to combine the well-studied notions of ordered covering numbers and local covering numbers and thereby continue research on both concepts.
The questions we ask for the new graph parameters naturally arise from those asked for the known concepts.
Studying ordered graphs and covering numbers is additionally motivated by applications in very-large-scale integration (VLSI) circuit design and bioinformatics~\cite{diogenes,bioOrdered,vlsiCover,bioCover}.
In addition, covers appear in network design~\cite{networkCover},
while queue layouts are closely related to 3-dimensional graph drawing~\cite{3dDrawing} and parallel multiplications of sparse matrices~\cite{matrixMult}.

\subsection{Contribution}
\label{sec:contribution}

We first observe that there are graphs whose local queue number is arbitrarily far from its queue number.
In addition, the local queue number is tied to the \emph{maximum average degree}, which is defined as 
$ \mad(G) = \max\{2 |E(H)| / |V(H)| \colon \allowbreak H \subseteq G, H \neq \emptyset\} $.
Both results are derived from the analogous results for local page numbers~\cite{localPageNumbers}, which is why we omit the proofs here. They can be found in \cref{sec:gap-mad}. 
\Cref{thm:mad} also implies that the local queue number is tied to the local page number, which is conjectured for the classical page number and classical queue number.

\begin{theorem}
    \label{thm:gap}
    For any $ d \geq 3 $ and infinitely many $ n $, there exist $ n $-vertex graphs with local queue number at most $ d + 2 $ but queue number $ \Omega(\sqrt{d}n^{1/2 - 1/d}) $.
\end{theorem}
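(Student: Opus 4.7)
The plan is to carry over the construction and argument used for the analogous page-number separation in~\cite{localPageNumbers}. I would take the same $n$-vertex host graph $G$ built there: a graph of maximum average degree $\mad(G)=O(d)$ that nevertheless enforces a large queue number by virtue of a built-in sparsity obstruction (high girth, or equivalently a forbidden-subgraph condition). The upper bound $\lqn(G)\leq d+2$ then follows by invoking the queue version of \cref{thm:mad}, itself obtained by the same argument as for pages: the low-$\mad$ property is used to produce a vertex ordering along which each vertex sees incident edges in only $O(\mad(G))$ queues.

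For the lower bound $\qn(G)=\Omega(\sqrt d\, n^{1/2-1/d})$, I would adapt the edge-counting argument from the page case. By Heath--Rosenberg~\cite{1queue}, any single queue, viewed as a subgraph of $G$, is arched leveled planar and in particular planar; it therefore inherits the girth or forbidden-subgraph property built into $G$. A Moore-type bound then caps the number of edges a single queue can carry, and comparing this cap to the total edge count $|E(G)|=\Theta(dn)$ forces $\qn(G)$ to be of the claimed order, with the same parameters that produce the page-number lower bound.

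The main obstacle I expect is establishing the Moore-type edge bound for a single queue. For pages this is almost free from outerplanarity, but for queues we only have planarity together with the arched-leveled structure, and one has to verify that planarity together with the built-in girth of $G$ still delivers the matching per-queue bound of roughly $O(n^{1+2/(g-2)})$. Once that lemma is in place, plugging the girth $g=\Theta(d)$ and density $|E(G)|=\Theta(dn)$ of the construction into it reproduces the page-number lower bound verbatim, while the same graph simultaneously satisfies $\mad(G)=O(d)$ and hence $\lqn(G)\leq d+2$, completing the separation.
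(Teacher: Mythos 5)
Your proposal diverges from the paper's proof in an essential way, and the route you sketch cannot deliver the stated lower bound. The paper's proof of \cref{thm:gap} is very short: it simply cites Heath, Leighton and Rosenberg~\cite{compQS} for the fact that for every $d\geq 3$ there are $d$-regular $n$-vertex graphs with queue number $\Omega(\sqrt{d}\,n^{1/2-1/d})$, observes that every $d$-regular graph has $\mad(G)=d$, and plugs this into \cref{thm:mad} to get $\lqn(G)\leq d/2+2\leq d+2$. The cited lower bound is a probabilistic counting argument over random $d$-regular graphs (compare the number of $d$-regular graphs to the number of graphs coverable by $k$ queues under some ordering); it makes no use of girth or forbidden subgraphs, and neither does the analogous argument in~\cite{localPageNumbers}.

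The girth-and-Moore-bound mechanism you propose is not just a different route, it is one that provably cannot reach the target. A single queue, as an abstract graph on $n'$ vertices, already has at most $2n'-3$ edges by Heath--Rosenberg; being planar with girth $g$ improves this only to $\tfrac{g}{g-2}(n'-2)$, which is still $\Omega(n')$ for every $g$. The bound $O(n^{1+2/(g-2)})$ you quote is the Moore bound for \emph{general} graphs of girth $g$; it is always at least $n$ and is in fact weaker than the planarity bound for all $g\geq 5$. So any per-queue edge cap obtainable this way is $\Omega(n)$, and dividing $|E(G)|=\Theta(dn)$ by it gives $\qn(G)=\Omega(d)$ at best, which for fixed $d$ and growing $n$ is vastly smaller than the required $\Omega(\sqrt{d}\,n^{1/2-1/d})$. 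To salvage your approach you would need a per-queue edge bound that is sublinear in $n$, and no girth condition can supply that for planar subgraphs. The lower bound simply has to come from a counting argument as in~\cite{compQS}, not from a forbidden-subgraph or girth obstruction.
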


\begin{theorem}
    \label{thm:mad}
    For any graph $ G $, we have
    \begin{equation*}
        \frac{\mad(G)}{4} \leq \lqn(G) \leq \frac{\mad(G)}{2} + 2.
    \end{equation*}
\end{theorem}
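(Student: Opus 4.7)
The plan is to prove the two inequalities separately.

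For the lower bound $\mad(G)/4 \leq \lqn(G)$, I would first invoke monotonicity of $\lqn$ under subgraphs to reduce to a subgraph $H \subseteq G$ attaining $2|E(H)|/|V(H)| = \mad(G)$. The key ingredient is the classical fact that any queue on $q$ vertices has at most $2q-3$ edges, which follows from a direct analysis of non-nesting arcs on a line (the extremal example places one vertex to the left and one to the right of all others and connects both to every interior vertex). Fix an $\ell$-local queue layout of $H$ with queues $Q_1,\ldots,Q_t$, and let $q_i$ denote the number of vertices of $H$ incident to an edge of $Q_i$. Summing the per-queue bound and using that each vertex contributes to at most $\ell$ of the $q_i$'s gives
\[
|E(H)| \;=\; \sum_{i=1}^{t} |E(Q_i)| \;\leq\; 2 \sum_{i=1}^{t} q_i \;\leq\; 2 \ell\,|V(H)|,
\]
so $\mad(G) \leq 4\ell$, which is the desired lower bound.

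For the upper bound $\lqn(G) \leq \mad(G)/2 + 2$, I would adapt the analogous construction for the local page number given in~\cite{localPageNumbers}. The starting point is that, by Hakimi's characterisation of pseudoarboricity, $G$ admits an orientation with maximum out-degree $d = \lceil \mad(G)/2 \rceil$. One then combines this orientation with a carefully chosen vertex order (e.g.\ obtained from iteratively placing a minimum-degree vertex last together with a BFS-style layering) and assigns each edge to a queue indexed by the rank of a designated endpoint in a local neighborhood list, reserving a constant number of extra queues to handle edges that cross layer boundaries and would otherwise create nestings. Each vertex then touches at most $d$ queues from its own rank assignments plus at most $2$ auxiliary queues, for a total of $d + 2 \leq \mad(G)/2 + 2$.

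The main obstacle is the upper bound construction. The page version in~\cite{localPageNumbers} must avoid ABAB-crossings, whereas the queue variant must avoid ABBA-nestings, so the ordering and the rank-based coloring have to be chosen differently. Verifying that each queue is nesting-free while simultaneously keeping the additive overhead at exactly $+2$ (rather than a larger constant) will require a careful case analysis, charging each potentially nesting pair of edges to the two ``in''/``out'' sides of the orientation; this is the step where the queue-specific structure diverges most from the page-number proof and where the constant $2$ genuinely originates.
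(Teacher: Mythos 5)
Your lower bound argument is essentially the paper's: restrict an optimal local queue layout to a densest subgraph $H$, use the Heath--Rosenberg bound $|E(Q)|\le 2|V_Q|-3$ per queue, and double-count vertex--queue incidences to obtain $|E(H)|\le 2\ell\,|V(H)|$. That part is sound and matches the paper.

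The upper bound, however, has a genuine gap: you sketch an orientation-plus-ordering-plus-rank-coloring scheme but explicitly concede that the decisive step (verifying nesting-freeness with only a $+2$ additive overhead) is unresolved. That step is not a routine case analysis, and the natural version of your plan does not work. If you orient $G$ with maximum out-degree $d$ and try to charge edges to out-stars, the center of each out-star touches one queue, but a high in-degree vertex is a leaf of arbitrarily many out-stars, so its locality is unbounded. Repairing this requires an edge-partition into stars in which \emph{every} vertex is in few stars, not just an orientation with small out-degree, and that is exactly the local star arboricity. The paper invokes the known bound $\arb(G)\le \sa_\ell(G)\le \arb(G)+1$ from Knauer--Ueckerdt, combines it with Nash--Williams' $\arb(G)\le \frac{1}{2}\mad(G)+1$, and then uses the one key structural observation you did not find: two edges of a star can never nest, under \emph{any} vertex ordering, because they share an endpoint. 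Hence any edge-partition into stars, with an arbitrary spine, is already a queue layout, and the locality of the star partition directly becomes the locality of the queue layout. This eliminates the entire ``careful case analysis'' your plan depends on. Without either the star-nesting observation or the local star arboricity bound, your proposed upper-bound route does not yield $\lqn(G)\le \frac{1}{2}\mad(G)+2$.
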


While the best upper bound for the queue number of $ k $-trees is $ 2^k - 1 $ due to Wiechert~\cite{treewidthExp}, \cref{thm:mad} already provides a linear upper bound for the local queue number of $k$-trees, which can be slightly improved.

\begin{theorem}
    \label{thm:lqn-ktree-upper}
    Every graph with treewidth $ k $ admits a $ (k + 1) $-local queue layout.
\end{theorem}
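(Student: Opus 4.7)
Since $\lqn$ is monotone under subgraph containment (as noted at the end of \cref{sec:definitions}), it suffices to construct a $(k+1)$-local queue layout of every $k$-tree $T$. My plan is to proceed by induction on the construction of $T$: start with the initial $(k+1)$-clique laid out in an arbitrary linear order (with each of its $\binom{k+1}{2}$ edges placed in its own queue, so every clique vertex sees only $k$ queues), and inductively insert the remaining vertices one at a time, updating the vertex ordering $\prec$ and the edge-to-queue assignment after each insertion. Overall I would allow arbitrarily many queues globally, but maintain as an invariant that at most $k+1$ queues are incident to any single vertex.

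In the inductive step, consider a new vertex $v$ with parent $k$-clique $C = \{c_1, \ldots, c_k\}$, indexed so that $c_1 \prec \cdots \prec c_k$ in the current ordering. I would place $v$ at a carefully chosen position relative to $C$---either non-nesting (outside $\spans(C)$, e.g.\ immediately to the right of $c_k$ or to the left of $c_1$) or nesting (in a specific gap between two consecutive $c_i$'s)---and assign each new edge $vc_i$ to a queue determined by the index $i$, possibly combined with an additional tag derived from $v$'s own position signature in the construction tree. The intended invariant is that at every vertex $w$ at most $k+1$ queues carry an incident edge: the $k$ queues used by $w$'s own parent edges (one per position in the parent clique $C_w$), plus at most one additional queue that collects the edges from $w$ to its children, or a careful recycling of the parent-edge queues using the fact that each child sees $w$ in a single position of its own parent clique.

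The main obstacle I expect is verifying the queue property---no two edges in the same queue nest---after each insertion. When $v$ is placed non-nesting, the new edges $vc_i$ are long and risk nesting with shorter edges already present in their target queues; when $v$ is placed nesting, short new edges may be nested inside longer existing edges that span across $C$. Resolving this tension requires choosing the placement rule based on the local configuration of $c_1, \ldots, c_k$ (for instance, based on their positions in their own parent cliques, or guided by a proper $(k+1)$-coloring of $T$), and then a case analysis showing that for every possible configuration of $C$ at least one placement is compatible with the queue assignment rule. I expect the delicate part of the proof to be formulating the right invariant describing, for each vertex and each queue, which ``sides'' of the vertex already carry edges of that queue, and showing that this invariant is preserved throughout the inductive construction.
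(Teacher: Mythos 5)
Your reduction to $k$-trees and the idea of working from a construction ordering are both correct, but your proposed inductive scheme takes a substantially harder route than is needed and, as written, has a genuine gap: you never pin down a queue assignment for which you can verify the no-nesting property. Assigning $vc_i$ to a queue keyed to the \emph{position} $i$ in the parent clique (plus tags) creates queues whose edges need not share an endpoint, so nesting really can occur, and your ``main obstacle'' paragraph correctly identifies that you have no argument for resolving it; the case analysis you sketch over configurations of $c_1\prec\cdots\prec c_k$ is not carried out and it is not clear it can be made to work.

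The key idea you are missing is to make every queue a \emph{star}. Fix a construction ordering $v_1,\dots,v_n$ and, for each vertex $v_i$, put into a single queue $Q_i$ exactly the edges from $v_i$ to its later-indexed neighbours (its children). Two edges in the same $Q_i$ share the endpoint $v_i$, and two edges sharing an endpoint can never nest, so \emph{every} vertex ordering is admissible and the queue property holds with no invariant to maintain and no case analysis at all. Each vertex $v_j$ has incident edges only in $Q_j$ and in $Q_{c}$ for the at most $k$ parents $c$ of $v_j$, giving $(k+1)$-locality. In short, your plan to allow many global queues while bounding the per-vertex count is exactly right, but the decomposition should be ``one star per vertex, centred at that vertex'' rather than ``one queue per position index in the parent clique''; with the star decomposition the problem of choosing the placement of $v$ and of verifying non-nesting simply disappears.
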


Suspecting that the bound in \cref{thm:lqn-ktree-upper} might be tight, we focus on lower bounds for the local queue number of $ k $-trees in \cref{sec:ktrees}.
Our main contribution is a tool that allows to focus on the construction of cliques with non-nesting children.
We use this to prove that \cref{thm:lqn-ktree-upper} is tight for $ k = 2 $ and that there are $ k $-trees whose local queue number is at least $ \lceil k/2 \rceil + 1 $ for $ k > 1 $.

\begin{theorem}
    \label{thm:qnl2tree}
    There is a graph with treewidth $ 2 $ and local queue number $ 3 $.
\end{theorem}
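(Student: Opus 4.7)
The plan is to exhibit an explicit 2-tree $G$ for which every 2-local queue layout fails; combined with \cref{thm:lqn-ktree-upper} this yields $\lqn(G)=3$. I would build $G$ iteratively: start from an edge $u_0v_0$ and, over a bounded number of levels, attach many children to every edge created at the previous level. The branching factor at each level will be dictated by the pigeonhole bounds that emerge from the analysis.

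The first step is to invoke the main tool of \cref{sec:ktrees} in order to reduce to a sub-2-tree in which, at every edge under consideration, all relevant children lie \emph{outside} their parent edge. For two such non-nesting children $w_i\prec w_j$ of an edge $uv$ placed to the right of $v$, a direct check shows that $uw_j$ and $vw_i$ nest, so they cannot share a queue; on the other hand, two edges sharing a vertex never nest, so edges at a common endpoint are only constrained by the $2$-local cap, not by the queue property itself.

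Assume now that a 2-local queue layout exists. The edge $uv$ lies in some queue $Q^{*}$, so the two queues used at $u$ are of the form $\{Q^{*},X\}$ and those at $v$ of the form $\{Q^{*},Y\}$ for suitable $X,Y$. Every edge $uw_i$ thus has colour in $\{Q^{*},X\}$ and every $vw_i$ has colour in $\{Q^{*},Y\}$, and a double pigeonhole extracts a large subfamily of children on which the colour pair $(a,b)$ is constant. The nesting obstruction between $uw_j$ and $vw_i$ rules out $a=b$, in particular killing the case $a=b=Q^{*}$. Pushing one level deeper, each selected child $w_i$ already uses the two colours $\{a,b\}$ at itself, so its own pair of local queues is pinned down up to an analogue of the initial choice; applying the same pigeonhole to the grandchildren attached at $uw_i$ and $vw_i$ extracts a still smaller but still rigidly coloured subfamily.

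After a bounded number of such iterations I expect to isolate a configuration of nested edges in which the queue assignment has been completely determined by the cascade of forced choices, producing two edges incident to a common vertex whose joint assignment contradicts either the 2-local cap at that vertex or the non-nesting condition inside a queue. The main obstacle I anticipate is precisely this propagation step: the queues available at the two endpoints of an edge agree only on the one forced common colour $Q^{*}$, so tracking which ``free'' queue at each vertex is inherited from the previous level is delicate, and getting this bookkeeping right is what determines both the number of levels and the exact branching factor required of $G$.
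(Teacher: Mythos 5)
Your high-level plan captures part of the spirit of the paper's argument (reduce to non-nesting children, pigeonhole to a subfamily with a constant queue pattern, use the nesting of $uw_j$ and $vw_i$ to rule out equal colours), and your observations are locally correct. But two substantive pieces are missing, and they are exactly where you flag difficulty.

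First, ``invoking the main tool to reduce to a sub-2-tree where all children lie outside their parent edge'' is not one step. \Cref{lem:startEdge} only yields \emph{one} edge with many non-nesting children; it does not recursively give an entire sub-2-tree in which every edge has this property. The paper's mechanism for propagating this level by level is the two-player game and the chain of reductions in \cref{lem:games}: Conditions~(iii)--(v) are bought by repeated amplification (multiplying $m_r$ by $\ell^k$, by $(m_r+1)|V(G_{r-1})|$, etc.), and in particular the step forcing Bob to place new vertices to the right of their parent clique exploits twin edges sharing a queue to create a rainbow otherwise. You cannot simply assume this holds at every level.

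Second, and more fundamentally, the iterated pigeonhole by itself cannot reach a contradiction. If $u$'s two local queues are $\{Q^\ast,X\}$ and $v$'s are $\{Q^\ast,Y\}$, then each selected child $w$ has queue pair $\{a,b\}\subseteq\{Q^\ast,X,Y\}$, and by induction \emph{every} edge in the sub-2-tree you extract lies in one of the three queues $Q^\ast,X,Y$. Three queues suffice to lay out any $2$-tree (Wiechert's construction realizes queue number exactly $3$), so ``pinning down the colours'' does not by itself force a rainbow. The paper closes the gap with a dichotomy in the proof of \cref{lem:alice-wins-2}: if Bob ever assigns a new edge to the same queue as its parent edge, the sibling edge $e'$ has both endpoints locally restricted to the same two queues, so the subgraph rooted at $e'$ can use only \emph{two} queues total, and plugging in Wiechert's 2-tree with (global) queue number $3$ produces a rainbow; and if Bob never does this, the colouring of a concrete $7$-vertex 2-tree is completely forced, and a rainbow appears for either of the two possible placements of vertex~6. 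Your proposal has neither the two-queue-subgraph reduction nor a concrete terminal construction, which is precisely the unresolved ``propagation step'' you acknowledge.
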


\begin{theorem}
    \label{thm:lqn-ktree-lower}
    For every $ k > 1 $, there is a graph $ G $ with treewidth $ k $ and local queue number at least $ \lceil k/2 \rceil + 1 $.
\end{theorem}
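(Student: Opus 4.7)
The plan is to establish the bound by induction on $k$ in steps of two, combined with the non-nesting-children reduction tool mentioned in the paper. The base case $k=2$ is already handled by Theorem~\ref{thm:qnl2tree}, which provides a 2-tree of local queue number 3, comfortably exceeding $\lceil 2/2\rceil+1 = 2$.

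For the inductive step from $k-2$ to $k$, I would start from a $(k-2)$-tree $G_{k-2}$ with local queue number at least $\lceil (k-2)/2\rceil + 1 = \lceil k/2 \rceil$ and build $G_k$ by adjoining two new vertices $u$ and $w$, each adjacent to every vertex of $G_{k-2}$ and to each other. This is a valid $k$-tree: start the construction from the $(k+1)$-clique consisting of $\{u, w\}$ together with an initial $(k-1)$-clique of $G_{k-2}$, and then add each further vertex of $G_{k-2}$ as a child of the $k$-clique $C \cup \{u, w\}$, where $C$ is its parent $(k-2)$-clique in the construction of $G_{k-2}$.

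To bound the local queue number from below, I would take any $\ell$-local queue layout of $G_k$ and use the tool to restrict the positions of the children of cliques inside $G_{k-2}$ as well as of $u$ and $w$. The restriction of the layout to $V(G_{k-2})$ is an $\ell$-local queue layout of $G_{k-2}$, so by induction some vertex $v \in V(G_{k-2})$ is incident to edges in at least $\lceil k/2 \rceil$ queues. It then remains to show that at least one of the two new edges $vu$ and $vw$ lies in a queue distinct from all of these, giving $\ell \geq \lceil k/2\rceil + 1$.

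The main obstacle is precisely this last step. A single universal vertex can always be placed at an extreme of the ordering so that all its incident edges fan inward from one side and can be squeezed into an existing queue without adding to the local count at $v$. With two universal vertices this evasion is blocked: they cannot both sit at the same extreme, and after the non-nesting reduction, at least one of $u, w$ ends up in a position where its edge to $v$ nests with an existing rainbow at $v$, forcing a genuinely new queue color. Carrying out the case analysis over the possible relative positions of $u$ and $w$ with respect to the ordering of $G_{k-2}$, and verifying that the non-nesting tool indeed rules out all evasive placements, is the technical crux of the proof.
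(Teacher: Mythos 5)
Your approach is genuinely different from the paper's, and it contains a gap that I don't think can be repaired along the lines you describe. The paper's proof is remarkably short and does not use induction at all: it takes $G$ to be a single $k$-clique $C = \{c_1 \prec \dots \prec c_k\}$ with $2s$ children. Any child placed between $c_i$ and $c_{i+1}$ (or outside $C$) is non-nesting with respect to either the prefix subclique $\{c_1, \dots, c_{\lceil k/2\rceil}\}$ or the suffix subclique $\{c_{\lfloor k/2\rfloor +1}, \dots, c_k\}$, both of which have size $\lceil k/2\rceil$. By pigeonhole one of these two $\lceil k/2\rceil$-cliques inherits at least $s$ non-nesting children, and \cref{lem:require-nonnesting-subgraph} with $\ell = k' = \lceil k/2\rceil$ finishes the job. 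So the ``non-nesting reduction tool'' you cite is indeed the engine, but the paper applies it to a single clique with many children, not to a graph built inductively.

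The gap in your proposal is the step you yourself flag as ``the technical crux.'' You want to argue that given a vertex $v \in V(G_{k-2})$ that already uses $\lceil k/2\rceil$ queues, one of the new edges $vu$ or $vw$ must receive a fresh queue colour, because it ``nests with an existing rainbow at $v$.'' But edges sharing a common endpoint can never nest with one another, so there is no rainbow among $v$'s incident edges, and nothing about $v$'s colour count directly obstructs assigning $vu$ or $vw$ to a queue already used at $v$. Any obstruction must come from edges \emph{not} incident to $v$ that happen to lie in those queues and nest with $vu$ or $vw$---but the induction hypothesis gives you no control over such edges. Concretely, if $v$ happens to be the leftmost vertex in the restricted layout and $u$ is placed to its left, then $vu$ nests with no edge at all and can join any queue. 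Two universal vertices alone do not close this escape route; that is essentially why the paper invokes the game machinery of \cref{lem:require-nonnesting,lem:require-nonnesting-subgraph} rather than a direct extension argument. (A minor additional issue: since your induction steps by 2, you would also need a base case for $k=3$, though that at least follows by monotonicity from the $k=2$ case.)
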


As the maximum average degree of planar graphs is strictly smaller than $ 6 $ and $ 2 $-trees are planar, \cref{thm:mad,thm:qnl2tree} bound the maximum local queue number of the class of planar graphs.
\begin{corollary}
    Every planar graph admits a $ 4 $-local queue layout and there is a planar graph whose local queue number is at least $ 3 $.
\end{corollary}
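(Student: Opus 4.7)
The plan is to derive both halves of the corollary directly from the two results the authors highlight right before the statement, namely \cref{thm:mad} and \cref{thm:qnl2tree}, together with a standard density fact about planar graphs.

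For the upper bound, I would argue as follows. By Euler's formula, every simple planar graph on $n \geq 3$ vertices has at most $3n - 6$ edges, and this bound is inherited by every subgraph (since subgraphs of planar graphs are planar). Hence for any planar graph $G$,
\[
    \mad(G) = \max\left\{ \frac{2|E(H)|}{|V(H)|} \colon H \subseteq G,\, H \neq \emptyset \right\} < 6.
\]
Plugging this strict inequality into the upper bound of \cref{thm:mad} gives
\[
    \lqn(G) \leq \frac{\mad(G)}{2} + 2 < 5.
\]
Because $\lqn(G)$ is an integer, this yields $\lqn(G) \leq 4$, i.e., every planar graph admits a $4$-local queue layout.

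For the lower bound, I would observe that every $2$-tree is planar: starting from a triangle and iteratively attaching a new vertex to the two endpoints of an existing edge is the standard incremental construction of a planar graph (each new vertex can be placed inside a face bounded by that edge). Therefore the $2$-tree provided by \cref{thm:qnl2tree}, which has local queue number $3$, is a planar graph witnessing $\lqn \geq 3$.

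There is no real obstacle here; the corollary is a direct combination of \cref{thm:mad}, \cref{thm:qnl2tree}, and the elementary density bound for planar graphs, so the only thing to be careful about is ensuring the inequality $\mad(G) < 6$ is \emph{strict}, which is what turns the bound $\mad/2 + 2$ into the integer bound $4$ rather than $5$.
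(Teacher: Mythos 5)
Your proposal is correct and matches the paper's argument exactly: the paper likewise combines $\mad(G) < 6$ for planar graphs with \cref{thm:mad} to get the upper bound of $4$, and uses the planarity of $2$-trees together with \cref{thm:qnl2tree} for the lower bound of $3$.
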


\section{The Local Queue Number of \texorpdfstring{$ k $}{k}-Trees}
\label{sec:ktrees}

We first provide a straight-forward construction for the upper bound of $ k + 1 $ for the local queue number of $ k $-trees, which proves \cref{thm:lqn-ktree-upper}.

\begin{proof}[of \cref{thm:lqn-ktree-upper}]
    We partition the edges of a $ k $-tree $ G $ into stars, each forming a queue.
    Consider an arbitrary construction ordering of $ G $ and let $ v_1, \dots, v_n $ denote the vertices of $ G $ in this ordering.
    For each vertex $ v_i \in V(G) $, $ i = 1, \dots, n $, we define a queue $ Q_i $ that contains all edges from $ v_i $ to its children,
    that is $ Q_i = \{v_i v_j \in E(G) \colon i < j\} $.
    Choosing an arbitrary vertex ordering yields a queue layout since edges of a star cannot nest.
    The layout is $ (k + 1) $-local since every vertex has at most $ k $ neighbors with smaller index.
\end{proof}

As our main tool for constructing $ k $-trees with large local queue number, we introduce a sequence of two-player games that are adaptions of a game introduced by Wiechert~\cite{treewidthExp}.
Taking turns, Alice constructs a $ k $-tree, which is laid out by Bob.
The rules for Alice stay the same in all games, whereas the rules for Bob include only the first $ n $ conditions in the $ n $-th game (see below).
We always assume that Bob has an optimal strategy and prepare Alice to react on all possible moves of Bob.
That is, when we write \emph{Alice wins}, then we mean that she wins regardless of the layout Bob chooses.

\begin{figure}
    \centering
    \includegraphics{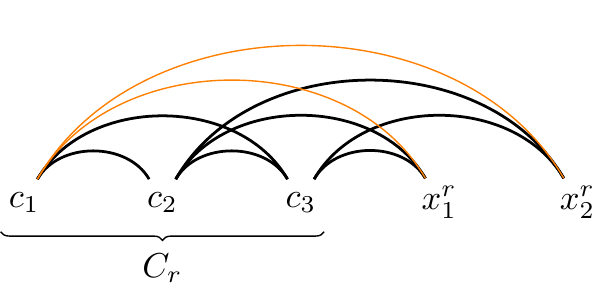}
    \caption{
        Notation for the Games~\labelcref{cond:layout,cond:firstRound,cond:consecutive,cond:twinColors,cond:differentColors}. 
        $ C_r $ is the parent clique that Alice chooses in the $ r $-th round and she chooses to add $ m_r = 2 $ children.
        The two children $ x_1^r $ and $ x_2^r $ are twin vertices. 
        The two orange (thin) edges are twin edges.
    }
    \label{fig:notation}
\end{figure}

The graph which is laid out in the $ r $-th round of any game is denoted by $ G_r $, the layout Bob creates by $ (\prec_r, \calQ_r) $. 
In the beginning, there is an initial clique $ \cinit $ whose edges are assigned to arbitrary queues.
In particular, we have $ G_0 = \cinit $.
The notation we introduce for the games is summarized in \cref{fig:notation}.
In the $ r $-th round, Alice chooses a $ k $-clique $ C_r $ from the current graph $ G_{r - 1} $ and an integer $ m_r $.
Now, $ m_r $ new vertices $ x_1^r, \dots, x_{m_r}^r $ are introduced and become adjacent to the vertices of $ C_r $.
The clique $ C_r $ is the \emph{parent clique} of the new vertices and edges.
Vertices with the same parent clique are called \emph{twin vertices} and two edges that share a vertex in the parent clique and are introduced in the same round are called \emph{twin edges}.
Then, in the $n$-th game, Bob inserts the new vertices into the current vertex ordering and assigns the new edges to queues satisfying the first $n$ of the following conditions:
\begin{enumerate}[label=(\roman*)]
    \item \label[cond]{cond:layout} 
        The layout $ (\prec_r, \calQ_r) $ is an $ \ell $-local queue layout of $ G_r $.
    \item \label[cond]{cond:firstRound}
        In the first round, all new vertices are placed to the right of $ \cinit $.
        Without loss of generality, we have $ \cinit \prec x_1^1 \prec \dots \prec x_{m_1}^1 $.
    \item \label[cond]{cond:consecutive}
        The new vertices $ x_1^r, \dots, x_{m_r}^r $ are inserted consecutively, i.e.\ $ y $ is not in the span of $ x_1^r, \dots, x_{m_r}^r $ for all vertices $ y \in V(G_{r - 1}) $ from the previous rounds.
    \item \label[cond]{cond:twinColors}
        Each two twin edges are assigned to the same queue.
    \item \label[cond]{cond:differentColors}\label[cond]{cond:right}
        The new vertices are placed to the right of their parent clique. 
        Without loss of generality, we have $ C_r \prec x_1^r \prec \dots \prec x_{m_r}^r $.
        The edges between a vertex $ x_i^r $, $ i \in \{1, \dots, m_r\} $, and its parent clique $ C_r $ are assigned to pairwise different queues.
        In particular, if $\ell = k$, then Bob cannot introduce new queues at vertex $ x_i^r $ in the following rounds.
\end{enumerate}

Alice wins the $ n $-th game if Bob cannot extend the layout without violating one of the first $ n $ conditions.
In particular, if Alice wins the first game, this implies the existence of a $ k $-tree with local queue number $ \ell + 1 $.
However, the first game is the hardest for Alice, whereas the games become easier when Bob's moves are more restricted.
During the proofs, we decide what Alice does but cannot control Bob's moves.
We say that Bob \emph{has to} act in a certain way if Alice wins otherwise.

We now set out to show how Alice wins \game{cond:layout} for $ k = \ell = 2 $.
We first present a $2$-tree with which Alice wins \game{cond:differentColors} and then show how to augment it until arriving at a $2$-tree with which she wins the first game.

\begin{lemma}
    There is a graph with which Alice wins \game{cond:differentColors} for $ k = \ell = 2 $.
    \label{lem:alice-wins-2}
\end{lemma}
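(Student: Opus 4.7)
My plan is to exhibit a specific short $2$-tree together with a corresponding strategy for Alice, and then verify that once the strategy is complete Bob cannot maintain a valid layout. Because \game{cond:differentColors} restricts Bob to placing new vertices consecutively to the right of their parent clique and forbids introducing new queues at a child (since $\ell = k = 2$), the palette of each newly introduced vertex is pinned down to exactly two queues at birth, so Bob's choice tree has bounded branching and the proof reduces to a finite case analysis.

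I would start with Round~1, in which Alice picks $C_1 = \cinit = \{u, v\}$ and adds $m_1 = 2$ twin children $x_1, x_2$. Conditions \cref{cond:firstRound,cond:consecutive,cond:twinColors,cond:differentColors} force Bob to order them as $u \prec v \prec x_1 \prec x_2$, to assign the twin edges $\{ux_1, ux_2\}$ to a single queue~$\alpha$, and to assign $\{vx_1, vx_2\}$ to another queue~$\beta \neq \alpha$. Consequently the palettes of $x_1$ and $x_2$ are fixed to the pair $\{\alpha, \beta\}$, a rigidity that the rest of the argument exploits.

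Next, in Round~2, Alice plays $C_2 = \{v, x_1\}$ with a single child~$y$. By \cref{cond:right}, $y$ must be placed right of~$x_1$. If Bob inserts~$y$ strictly between~$x_1$ and~$x_2$, then $x_1 y$ lies in $\{\alpha, \beta\}$ and nests with whichever of $ux_2$ or $vx_2$ sits in the chosen queue, producing a rainbow. So Bob is forced to place~$y$ right of~$x_2$. Iterating this idea, for instance by playing analogous moves at $\{v, x_2\}$ and at cliques on the newly created vertices, Alice can systematically push Bob into a corner where each of the remaining palette assignments for the next child creates a rainbow with an earlier edge. In particular, denoting by $A$ the queue of~$uv$, each new child's palette is one of the three types $\{A, \alpha\}$, $\{A, \beta\}$, or $\{\alpha, \beta\}$, and Alice's follow-up clique can be tailored to each type.

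The main obstacle I expect is the bookkeeping of which existing edge nests with the forced new edge in each branch of Bob's colour choices. Making this work requires choosing Alice's later parent cliques so that both of Bob's permitted colour options for the new child coincide with a queue already containing an edge whose span contains the new edge; the explicit small construction must be designed so that this happens in every branch. An additional subtlety is that, while a child's palette is frozen at birth by \cref{cond:differentColors}, the palettes at $u$ and $v$ can still grow until they are saturated, so Alice's strategy must also saturate these palettes before delivering the final constraint. I would expect the formal proof to spend most of its length enumerating the three palette branches and, for each, naming the concrete clique Alice plays to close it out.
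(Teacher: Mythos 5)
Your high-level framing is right — \game{cond:differentColors} does pin each child's palette at birth, so Bob's choices are finite and Alice can in principle win by a small explicit $2$-tree — but the proposal stops at a plan and the plan as written has a real hole. You claim that after Round~1 "each new child's palette is one of the three types $\{A,\alpha\}$, $\{A,\beta\}$, or $\{\alpha,\beta\}$," but \cref{cond:differentColors} does not forbid $\alpha=A$ or $\beta=A$. If, say, $\alpha=A$, then $u$ has incident edges in only one queue after Round~1 and so Bob is free to introduce a fresh queue at $u$ later; your palette taxonomy silently assumes both $u$ and $v$ are already saturated, which is exactly what needs to be argued. You notice this yourself ("Alice's strategy must also saturate these palettes before delivering the final constraint") but do not resolve it, and you never exhibit the concrete graph or the promised enumeration of branches — the write-up ends with the expectation that the formal proof would "spend most of its length" on the case analysis.

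The paper closes precisely this hole with a reduction you are missing, which also collapses the case analysis you are worried about. It first argues that Bob may be assumed never to assign a new edge to the same queue as its parent clique: if he did, both endpoints of the sibling edge $e'$ would be confined to the same two queues, \cref{cond:differentColors} then prevents any new queue anywhere in the subgraph rooted at $e'$, and Alice wins by growing Wiechert's $2$-tree of queue number~$3$ from $e'$. This simultaneously rules out the degenerate cases $\alpha=A$ and $\beta=A$ and makes every subsequent queue assignment forced, so that only the vertex ordering retains any freedom; the paper then presents a fixed $7$-vertex $2$-tree in which both remaining orderings produce a rainbow. To turn your sketch into a proof you would either need to rediscover this reduction or actually carry out the branch enumeration (including the degenerate palettes), name the $2$-tree, and verify each branch — none of which is done here.
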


\begin{proof}
    \begin{figure}
        \centering
        \includegraphics{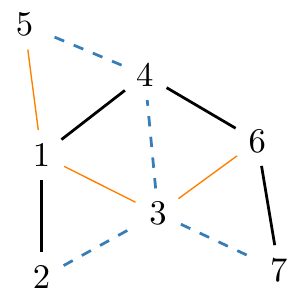}\hfill
        \includegraphics{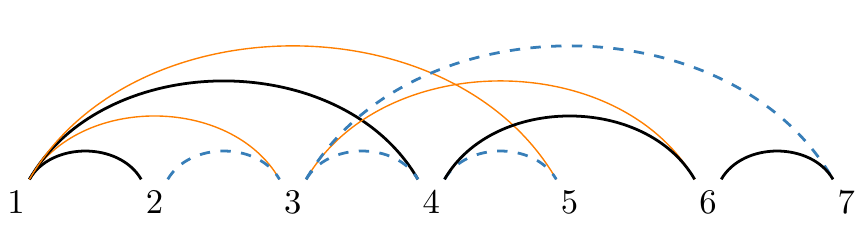}
        \caption{2-tree for \game{cond:differentColors} (left) and the layout chosen by Bob (right)}
        \label{fig:AliceWinsGraph}
        \label{fig:AliceWinsLayout}
    \end{figure}

    Consider the $ 2 $-tree presented in \cref{fig:AliceWinsGraph}.
    The edge $ \{1, 2\} $ is the initial clique $\cinit$ and Alice introduces in five rounds the vertices one-by-one in the order indicated by their number.
    We first argue why we may assume that Bob does not assign any new edge to the same queue as its parent clique.
    Assume that in some round $r$, Bob chooses to assign an edge $e$ between the parent clique $ C_r $ and its child $x_1^r$ to the same queue $ Q $ as $C_r$.
    \Cref{cond:differentColors} ensures that the other edge $e'$ between $C_r$ and $x_1^r$ is assigned to a different queue $ Q' $.
    Thus, both endpoints of $ e' $ have incident edges in the same two queues $ Q $ and $ Q' $.
    In particular, Bob cannot introduce new queues for edges adjacent to $ e' $.
    \Cref{cond:differentColors} further states that Bob cannot introduce new queues at any vertex that does not belong to the initial clique.
    It follows that Bob can use only the two queues $ Q $ and $ Q' $ for any subgraph that starts with $ e' $ as initial clique.
    Now, Alice wins by constructing the $ 2 $-tree with queue number $ 3 $ provided by Wiechert~\cite{treewidthExp} using $e'$ as the initial clique.
    
    Hence, Bob has no choice when assigning the edges to queues.
    The vertex ordering is determined by the rules except for the placement of vertex $ 6 $ which may be placed between the vertices $ 4 $ and $ 5 $.
    In this case, however, the edges $ \{1, 5\} $ and $ \{3, 6\} $ form a rainbow.
    Thus, Bob chooses the layout shown in \cref{fig:AliceWinsLayout}, which again has a rainbow (edges $ \{3, 7\} $ and $ \{4, 5\} $) and Alice wins \game{cond:differentColors}.
\end{proof}

We give the reductions from \game{cond:differentColors} to \game{cond:firstRound} in a more general way, so we can reuse them for subsequent lemmas.

\begin{lemma}
    Let $ k > 1 $ and $ \ell \leq k $.
    If Alice wins \game{cond:differentColors}, then she also wins \game{cond:firstRound}.
    \label{lem:games}
\end{lemma}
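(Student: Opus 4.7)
The plan is to chain three independent reductions, each removing one of the conditions (v), (iv), (iii) in turn, so that winning \game{cond:differentColors} implies winning \game{cond:twinColors}, which implies winning \game{cond:consecutive}, which implies winning \game{cond:firstRound}. In every step, Alice takes her winning strategy $\sigma$ for the stricter game and plays a blown-up version of it: wherever $\sigma$ would introduce $m_r$ children of a clique $C_r$, she instead introduces $N_r$ children, with $N_r$ chosen large enough (depending on $k$, $\ell$, and the number of rounds $\sigma$ still has to play) so that a pigeonhole argument on Bob's response extracts a subset $S_r$ of size $m_r$ on which the missing condition is automatically satisfied. Alice then treats $S_r$ as the children $\sigma$ asked for and ignores all other new vertices in subsequent rounds.

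The three pigeonhole arguments have different flavours but are all elementary. Removing (iii) exploits the finiteness of $V(G_{r-1})$: among sufficiently many new twin children, a block of $m_r$ must fall into a single gap between consecutive vertices of $V(G_{r-1})$ under $\prec_r$ and is therefore already consecutive. Removing (iv) exploits the finiteness of the queue set currently in use: there are at most $\ell^k$ colour patterns for the $k$ parent-clique edges of a single child, so a sufficiently large $N_r$ yields $m_r$ children whose patterns coincide, satisfying the twin-colour condition. Removing (v) is a double pigeonhole, first on the position of each new child relative to $C_r$ (to the right, to the left, or inside) and then, on the right side, on whether the $k$ parent-clique edges of each child are assigned to pairwise distinct queues.

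The main obstacle is the non-trivial branch of the (v)-reduction, where Bob's homogeneous subset consists of children placed to the left of or inside $C_r$, or of children whose parent-clique edges use fewer than $k$ distinct queues: in this case $\sigma$ cannot be invoked directly and a dedicated argument is needed to show that Alice still wins. I expect this branch to be resolved by showing that such behaviour of Bob either forces a rainbow in some queue---by nesting with edges incident to children from subsequent rounds---or saturates too many queues at vertices of $C_r$, in the spirit of the bootstrapping already used in \cref{lem:alice-wins-2} where an initial clique with a saturated queue is passed to a smaller instance. Once this is settled, propagating the blow-up factors $N_r$ through the three successive reductions is routine bookkeeping and keeps the total number of rounds of the resulting strategy finite.
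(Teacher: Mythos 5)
Your reductions for conditions~(iii) and~(iv) match the paper's: pigeonhole on gaps between old vertices to force consecutiveness, and pigeonhole on the at most $\ell^k$ colour patterns of the parent-clique edges to force twin colours. Where you diverge is the reduction from \game{cond:differentColors} to \game{cond:twinColors}, and here the gap you acknowledge at the end of your proposal is in fact the crux of the lemma, not routine bookkeeping.

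The paper does not use pigeonhole to extract \cref{cond:differentColors} at all. Instead, Alice adds two extra ``bracket'' twin vertices in every round, so that a batch $x_0, x_1, \ldots, x_m, x_{m+1}$ is added to each parent clique $C_p$, and Alice later plays only on $x_1, \ldots, x_m$. Once \cref{cond:twinColors} holds, the brackets give both halves of \cref{cond:differentColors} for free. For the colour half: by induction $C_p \prec X$, so $c_h x_1$ and $c_j x_0$ nest whenever $h < j$; since twin edges $c_h x_1, c_h x_i$ share a queue and likewise $c_j x_0, c_j x_i$, the $k$ edges from $x_i$ to $C_p$ must be pairwise differently coloured, and (as $\ell \leq k$, hence $\ell = k$ or Alice wins immediately) $x_i$ has all its queues saturated. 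For the placement half: when Alice later adds a child $x^r$ to a clique containing $x_i$, the edge $x_i x^r$ must reuse a queue containing some $c x_i$ with $c \in C_p$; placing $x^r$ between $c$ and $x_{m+1}$ makes $x_i x^r$ nest with the twin edge $c x_{m+1}$, and placing $x^r$ left of $c$ makes it nest with $c x_0$, so Bob is forced to place $x^r$ right of $x_{m+1}$. No pigeonhole is needed, and no case remains in which Bob puts children on the left or inside.

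Your proposed double pigeonhole does not close this gap. If Bob's homogeneous subset consists of children to the left of $C_r$, or of children whose parent-clique edges collide, you cannot invoke $\sigma$, and your sketch (``saturates too many queues'' or ``forces a rainbow by nesting with later children'') is exactly what needs to be proved --- and it is not obviously true without the bracket device, because in the raw \game{cond:twinColors} nothing prevents Bob from consistently placing new vertices to the left. The bracket vertices are the idea your plan is missing.
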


\begin{proof}
    We prove that Alice wins the $ n $-th game provided a strategy with which she wins the $ (n + 1) $-st game.
    We do this by adapting the strategy such that the $ (n + 1) $-st condition is satisfied in the $ n $-th game.
    Alternatively, we prove that Alice wins the $ n $-th game if Bob does not act as claimed in the $ (n + 1) $-st condition.
    Thus, we may assume that the $ (n + 1) $-st condition holds and apply the given strategy for the $ (n + 1) $-st game.
    
    \gameRed{cond:differentColors}{cond:twinColors}
    We assume that Alice has a strategy to win \game{cond:differentColors} and present how Alice adapts her moves to ensure \cref{cond:differentColors} in \game{cond:twinColors}.
    We exploit the vertex placement and queue assignment of twin vertices and twin edges to prove that Bob creates a rainbow unless he places new vertices to the right of their parent clique.
    We thereby observe that each two non-twin edges introduced in the same round are assigned to different queues.
    We proceed by induction on the number of rounds $ r $.
    The vertex placement in the first round is established by \cref{cond:firstRound}.
    In each of the succeeding rounds, let Alice increase the number $ m_r $ of added vertices by 2.
    
    Consider a set of twin vertices $ X = \{x_0^p, x_1^p, \dots, x_{m_p + 1}^p\} $ that were added to a clique $ C_p $ in a former round $ p $.
    To simplify the notation, we write $ x_i $ instead of $ x_i^p $ and $ m $ instead of $ m_p $.
    The vertices of $ C_p $ are denoted by $ c_1 \prec \dots \prec c_k $.
    By induction, we have $ C_p \prec X $.
    Observe that this already implies that the edges at each $ x_i $ are assigned to pairwise different queues since $ c_h x_1 $ and $ c_j x_0 $ nest for $ 1 \leq h < j \leq k $ and twin edges are assigned to the same queue due to \cref{cond:twinColors} (see \cref{fig:differentColors}).
    In particular, Bob has to choose one of the existing queues for new edges incident to $ x_i $.
    
    \begin{figure}
        \centering
        \includegraphics{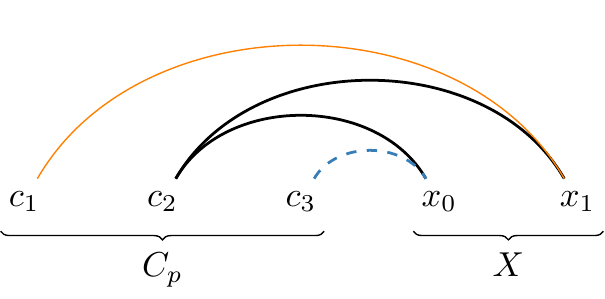}
        \caption{
            The twin edges $ c_2 x_0 $ and $ c_2 x_1 $ are in the same queue. 
            All edges between $ C_p $ and $ X $ have a twin edge forming a rainbow with one of them.
        }
        \label{fig:differentColors}
    \end{figure}
    
    \begin{figure}
        \centering
        \includegraphics{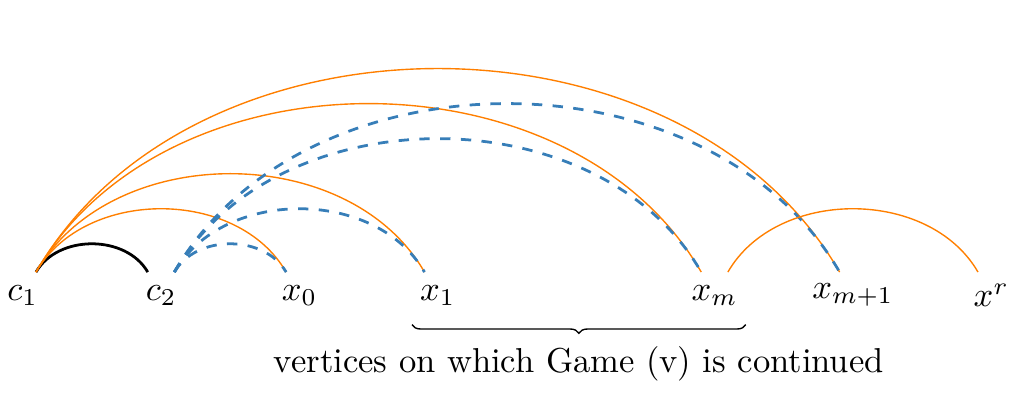}
        \caption{
            Vertices $ x_0, \dots, x_{m + 1} $ and their parent clique $ C_p $. 
            A child $ x^r $ of $ x_i $, $ i \in \{1, \dots, m\} $, can only be placed to the right of $ x_{m + 1} $.
        }
        \label{fig:gameRight}
    \end{figure}

    Alice now continues applying her strategy for \game{cond:right} using the vertices $ x_1, \dots, x_m $.
    Assume that, in some round $ r $, she chooses a clique $ C_r $ that contains $ x_i $ for some $ i \in \{1, \dots, m\} $ (see \cref{fig:gameRight}).
    Bob has to insert a child $ x^r $ of $ C_r $ into the vertex ordering and has to choose a queue $ Q $ for $ x_i x^r $ which already contains an edge $ c x_i $ with $ c \in V(C_p) $.
    If Bob places $ x^r $ between $ c $ and $ x_{m + 1} $, then $ c x_{m + 1} $ and $ x_i x^r $ form a rainbow in the same queue.
    If Bob places $ x^r $ to the left of $ c $, then $ c x_0 $ and $ x_i x^r $ nest and are assigned to the same queue.
    Hence, he has to place the new vertices to the right of $ x_{m + 1} $ and therefore satisfies \cref{cond:right}.
    Thus, Alice may apply her strategy for \game{cond:right} to win.
    
    \gameRed{cond:twinColors}{cond:consecutive}
    We consider \game{cond:consecutive} and show how Alice ensures \cref{cond:twinColors}, i.e.\ that each two twin edges are assigned to the same queue.
    For this, recall that each vertex has incident edges in at most $ \ell $ different queues.
    Thus, Bob has at most $ \ell^k $ possibilities to assign the $ k $ edges of a new vertex to queues.
    Alice multiplies the number $ m_r $ of added vertices by $ \ell^k $ in each round and thus finds $ m_r $ twin vertices whose twin edges are assigned to the same queues.
    She continues the game on those with the same strategy with which she wins \game{cond:twinColors} and ignores the others.
    
    \gameRed{cond:consecutive}{cond:firstRound}
    \Cref{cond:consecutive} forces Bob to insert twin vertices consecutively into the vertex ordering.
    Let $ m_r $ be the number of vertices that Alice adds to the current graph in the $ r $-th round of \game{cond:firstRound}.
    To simulate \cref{cond:consecutive} in \game{cond:firstRound}, Alice chooses to add $ (m_r + 1) \cdot |V(G_{r - 1})| $ vertices instead of only $ m_r $.
    By pigeonhole principle, Bob places at least $ m_r $ new vertices consecutively. 
    Alice now uses her strategy from \game{cond:consecutive} to win \game{cond:firstRound}.
\end{proof}

Finally, the following lemma justifies to introduce \cref{cond:firstRound} if $ k = \ell = 2 $.
That is, we use \cref{lem:startEdge} to win \game{cond:layout} provided a strategy to win \game{cond:firstRound}.
We choose $ s = 2 m_1 $, where $ m_1 $ is the number of vertices that are introduced in the first round of \game{cond:firstRound}.
The clique given by \cref{lem:startEdge} serves as initial clique \cinit.
Without loss of generality, \cinit has $ m_1 $ children to the right and therefore satisfies \cref{cond:firstRound}.

\begin{lemma}
    \label{lem:startEdge}
    For any $ s > 0 $, there is a $ 2 $-tree $ G $ such that for every $ 2 $-local queue layout there is an edge with at least $ s $ non-nesting children.
\end{lemma}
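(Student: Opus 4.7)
The plan is to build $G$ by induction on $s$. For the base case $s = 1$, any 2-tree with at least four vertices works: in any linear vertex ordering, the leftmost non-initial vertex lies strictly to the left of both endpoints of its parent edge and is therefore a non-nesting child of that edge.

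For the inductive step, assume $G_{s-1}$ has the desired property for $s-1$ and define $G_s$ by starting with an edge $e_0 = uv$, attaching a large number $N = N(s)$ of children $w_1, \ldots, w_N$ to $e_0$, and then attaching to each of the $2N$ resulting edges $uw_i$ and $vw_i$ a fresh copy of $G_{s-1}$ having that edge as its initial edge. The parameter $N$ will be chosen polynomially in $s$ and in $|V(G_{s-1})|$.

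Given any $2$-local queue layout of $G_s$, I would first examine $e_0$: if it already has at least $s$ non-nesting children, we are done. Otherwise, at most $s-1$ of the $w_i$ are non-nesting, so at least $N - (s-1)$ of them lie inside $\spans(\{u,v\})$. By pigeonhole on the two queues at $u$ and the two queues at $v$, I would extract a large sub-collection $W'$ of these nesting children for which the edges $uw_i$ and $vw_i$ with $w_i \in W'$ all use the same pair of queues at $u$ and at $v$. For each such $w_i$ both queues at $w_i$ are then already saturated, so any descendant of $w_i$ that would be placed inside $\spans(\{u,w_i\})$ or $\spans(\{v,w_i\})$ is forced to reuse these queues, severely constraining its position.

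I would then apply the inductive hypothesis to one of the copies of $G_{s-1}$ attached at some $uw_i$ or $vw_i$ with $w_i \in W'$: this copy contains, by induction, an edge $f$ with at least $s-1$ non-nesting children, and the additional rigidity imposed by the surrounding $2$-local layout — specifically, the saturation of queues at $w_i$ together with the cramped $\spans$ of $uw_i$ and $vw_i$ — forces one further child of $f$ outside its span, contributing the desired $s$-th non-nesting child.

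The main obstacle is executing this upgrade from $s-1$ to $s$ non-nesting children. The ambient queue assignment at the boundary of the attached $G_{s-1}$-copy must be strong enough to push an extra child out of its parent span, and this in turn dictates how large $N$ has to be and how exactly the copies are glued in. A correct choice of $N$ together with a careful tracking of which queues at $u$, $v$, and the $w_i$ remain available is the delicate point of the argument.
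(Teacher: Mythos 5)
There are two genuine gaps. First, the base case is false as stated: for a four-vertex $2$-tree with initial triangle $\{1,2,3\}$ and child $4$ of the edge $\{1,2\}$, the ordering $1 \prec 4 \prec 2 \prec 3$ (with any queue assignment) is a $2$-local layout in which the only child, $4$, lies in $\spans(\{1,2\})$ and is therefore nesting. The claim that ``the leftmost non-initial vertex lies strictly to the left of both endpoints of its parent edge'' is simply not true, since either endpoint may be an initial vertex sitting to its left, and there is no ordering constraint that prevents it from being flanked by them.

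Second, and more seriously, the inductive step is not a proof but an aspiration. The induction hypothesis gives you an edge $f$ inside an attached $G_{s-1}$-copy with at least $s-1$ non-nesting children \emph{with respect to the induced layout of that copy alone}; nothing in what you wrote shows that the ambient $2$-local constraints at $u$, $v$, and $w_i$ force an \emph{additional} child of $f$ out of its span. The phrase ``severely constraining its position'' and the admission that this upgrade is ``the delicate point of the argument'' are exactly where a proof would have to be, and no mechanism is offered for it — the cramped spans restrict where descendants may go, but a nesting child of $f$ can still comfortably sit between $f$'s endpoints regardless of what queues are saturated outside that span. As it stands, the recursion from $s-1$ to $s$ does not close. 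The paper avoids induction on $s$ entirely: it takes a single $(s+4)$-ary $2$-tree of fixed depth $6$, assumes for contradiction that every internal edge has at least five nesting children, and derives a rainbow within one queue by a short pigeonhole argument over the (at most) two queues available at each vertex. That direct approach sidesteps the exact upgrade step your plan cannot supply, so you should consider replacing the inductive scaffolding with a bounded-depth pigeonhole/rainbow argument of that kind.
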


\begin{proof}
    An \emph{$ m $-ary $ 2 $-tree of depth $ t $} is constructed as follows. 
    We start with an edge whose depth, and also the depth of its endpoints, is defined to be 0.
    For $ 0 < i \leq t $, depth-$ i $ edges are introduced inductively by adding $ m $ children to each depth-$ (i - 1) $ edge.
    The depth of the new children is $ i $.
    $ G $ is an $ (s + 4) $-ary 2-tree of depth 6 and is partly shown in \cref{fig:belowGraph}.
    
    For the sake of contradiction, consider a 2-local queue layout of $ G $ such that every edge of depth less than 6 has at least five nesting children.
    We shall find a rainbow in one of the queues.
    
    \begin{figure}
        \centering
        \includegraphics{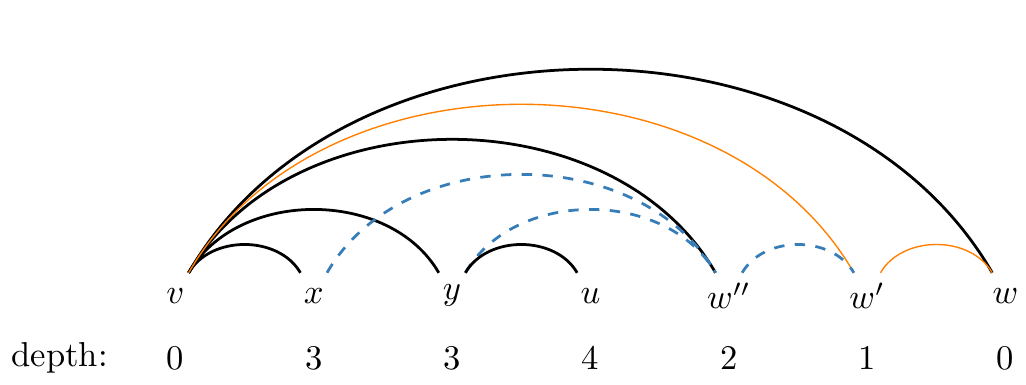}
        \caption{%
            2-tree with nesting children. 
            The edges in \nameColorA (thick), \nameColorB (thin), and \nameColorC (dashed) correspond to the respective queues.
            The edges $ vx $ and $ vy $ could also be \nameColorB, $ yu $ is \nameColorA, \nameColorB, or \nameColorC and creates a rainbow in either case.
        }
    \label{fig:belowGraph}
    \end{figure}

    Let $ vw $ denote the depth-0 edge and let $ w' $ be a nesting child of $ vw $.
    The initial edge $ vw $ is assigned to some queue \qa.
    We now assume that $ vw' $ is assigned to a different queue \qb and handle the other case later.
    Consider a nesting child $ w'' $ of $ vw' $.
    Next, we have five depth-3 children of $ vw'' $ which are placed below their parent edge by assumption.
    Since the layout is 2-local, there are only four possible combinations how the edges of depth 3 incident to $ v $ and $ w'' $ can be assigned to queues.
    By pigeonhole principle, there are two vertices $ x $ and $ y $ with $ vx $ and $ vy $ assigned to the same queue $ Q $, where $ Q = \qa $ or $ Q = \qb $, and $ xw'', yw'' \in \qc $ for some queue $ \qc $.
    Note that $ \qc \neq \qa, \qb $ as otherwise this would create a rainbow in the respective queue.
    Without loss of generality, we have $ x \prec y $.
    
    Finally, consider a nesting child $ u $ of the edge $ yw'' $.
    Since the layout is 2-local, we have $ yu \in Q $ or $ yu \in \qc $, that is $ yu $ is assigned to \qa, \qb, or \qc.
    In all three cases, there is a rainbow in the respective queue.
    
    To end the proof, consider the case that $ vw' $ is assigned to \qa.
    If $ vx, vy \in \qa $, then the argumentation above still works.
    Otherwise, use $ vx $ instead of $ vw' $.
    
    We conclude that in every 2-local queue layout, one of the edges has fewer nesting children than we used in the construction, in particular fewer than five.
    Therefore, all further children are non-nesting.
\end{proof}

\Cref{lem:alice-wins-2,lem:games,lem:startEdge} together show that Alice wins \game{cond:layout} for $ k = \ell = 2 $.
That is, there is a $ 2 $-tree with local queue number $ 3 $, which proves \cref{thm:qnl2tree}.

Since Bob may use more queues for larger $ k $, the approach for \cref{lem:alice-wins-2} using a $ k $-tree with queue number $ k + 1 $ works only for $ k = 2 $.
However, we introduce two new conditions that restrict Bob's moves further and offer Alice a way to win \game{cond:differentColors} for any $ k > 1 $ and $ \ell \leq k $.

For Games~\labelcref{cond:sisters,cond:diverseChildren}, we change the initial setup.
Instead of starting with only one clique, we start with two cliques and proceed on both cliques in parallel.
We thereby get two copies of the same graph (a \emph{left graph} and a \emph{right graph}), where each vertex, edge, and clique has a corresponding copy in the other graph.
In the $ r $-th round, Alice now chooses a $ k $-clique in the left graph and its copy in the right graph, and adds $ m_r $ new vertices to each.
\Cref{cond:layout,cond:firstRound,cond:consecutive,cond:twinColors,cond:differentColors} apply to the left graph and to the right graph independently.
In addition, Bob has to satisfy the following conditions (only the first for \game{cond:sisters} and both for \game{cond:diverseChildren}):

\begin{figure}
    \centering
    \includegraphics{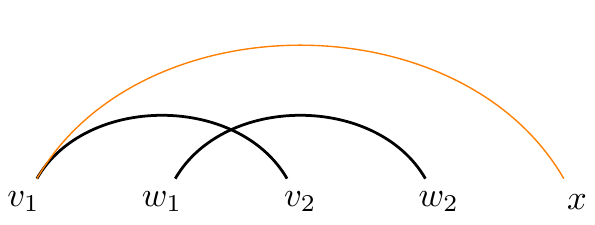}
    \caption{Two sister edges and a child $ x $ to the right of both}
    \label{fig:diverseChildren}
\end{figure}
    
\begin{enumerate}[label=(\roman*), resume]
    \item \label[cond]{cond:sisters}
        Each two cliques $ C $ and $ C' $ that are copies of each other are laid out alternatingly, that is $ c_1 \prec c_1' \prec c_2 \prec c_2' \prec \dots \prec c_k \prec c_k' $ for vertex sets $ V(C) = \{c_1, \dots, c_k\} $ and $ V(C') = \{c_1', \dots, c_k'\} $.
        In particular, new vertices and their copies are placed in the order of their parent cliques to the right of both parent cliques.
        Every edge is assigned to the same queue as its copy.
    \item \label[cond]{cond:diverseChildren}
        Consider an edge $ v_1 v_2 $ with its copy $ w_1 w_2 $ (see \cref{fig:diverseChildren}).
        If there is a child $ x $ of $ v_1 $ to the right of both edges, then the edges $ v_1 x $ and $ v_1 v_2 $ are assigned to different queues.
\end{enumerate}

We show how Alice wins Games~\labelcref{cond:differentColors,cond:sisters,cond:differentColors} for any $ k > 1 $ and $ \ell \leq k $ in \cref{sec:games}. 
The two new games together with \cref{lem:games} lead to the following lemma showing that it suffices to find a $ k $-tree with non-nesting children to prove lower bounds.
We remark that placing children outside their parent clique would be a natural way to avoid nesting edges.
Note that \cref{cond:firstRound} is justified by the requirement of \cref{lem:require-nonnesting}.

\begin{lemma}
    \label{lem:require-nonnesting}
    Let $ k > 1 $ and $ \ell \leq k $.
    Assume that for any $ s > 0 $ there is a $ k $-tree such that for every $ \ell $-local queue layout, there is a $ k $-clique with at least $ s $ non-nesting children.
    Then, there is a $ k $-tree with local queue number at least $ \ell + 1 $.
\end{lemma}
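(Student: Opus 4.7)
The plan is to bootstrap Alice's winning strategy for \game{cond:firstRound} via the non-nesting children hypothesis. By \cref{sec:games}, Alice wins \game{cond:differentColors} for any $k > 1$ and $\ell \leq k$, and then \cref{lem:games} yields a winning strategy $\sigma$ for \game{cond:firstRound}. Let $m_1$ denote the number of round-1 vertices in $\sigma$, i.e., the number of children of $\cinit$ that Alice introduces in her first move.

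First I would apply the hypothesis with $s = 2m_1$ to obtain a $k$-tree $H$. In any $\ell$-local queue layout of $H$, some $k$-clique $C$ has at least $2m_1$ non-nesting children; by pigeonhole, at least $m_1$ of them lie on the same side of $C$, without loss of generality to the right. Such a configuration mirrors precisely \cref{cond:firstRound} with $C$ playing the role of $\cinit$ and the $m_1$ right-placed children playing the role of the round-1 vertices $X_1$.

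Next I would form the desired $k$-tree $G$ by attaching to each $k$-clique $C$ of $H$ a copy of the continuation of $\sigma$ from round~2 onward, using $C$ as $\cinit$ and a designated set of $m_1$ of its (yet-to-be-placed) non-nesting right-children as $X_1$. Because $\sigma$ is adaptive, this continuation is unfolded against all possible placements and queue assignments Bob might make on the round-1 vertices; the unfolding stays finite since per-round branching is finite (only finitely many positions and finitely many queues). Each subtree is glued via a $k$-clique as initial clique, so $G$ is a $k$-tree. Suppose for contradiction that $G$ admits an $\ell$-local queue layout $L$. Restricting $L$ to $H$ locates a clique $C^*$ with $m_1$ non-nesting children on one side, and reading off Bob's moves from $L$ on the continuation attached to $C^*$ yields a complete play of \game{cond:firstRound} in which both \cref{cond:layout} (because $L$ is $\ell$-local) and \cref{cond:firstRound} (by the choice of $C^*$ and its non-nesting children) are satisfied, contradicting that $\sigma$ is a winning strategy for Alice. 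Hence $\lqn(G) \geq \ell + 1$.

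The main obstacle is making the unfolding-and-attachment step rigorous: the continuation of $\sigma$ must finitely capture every possible Bob response to the pre-placed round-1 children, and the collection of continuations attached to all $k$-cliques of $H$ must combine into a single finite $k$-tree. Once this bookkeeping is settled, the pigeonhole and contradiction steps are immediate, and the argument is a direct generalization of how \cref{lem:startEdge} is used in the $k = \ell = 2$ case preceding \cref{thm:qnl2tree}.
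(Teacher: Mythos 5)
Your proposal follows the same route as the paper: invoke the winning strategy for \game{cond:differentColors} from \cref{sec:games}, push it to \game{cond:firstRound} via \cref{lem:games}, then use the hypothesis with $s = 2m_1$ so that the promised clique with $2m_1$ non-nesting children can serve as \cinit (pigeonholing $m_1$ of them to one side) and launch the round-2 continuation. The paper leaves the ``unfold Alice's adaptive strategy into one finite universal $k$-tree'' step implicit exactly as you flag it; your slight imprecision in calling the designated round-1 children ``yet-to-be-placed'' (they are already vertices of $H$, and a priori one must account for every choice of which $m_1$ of the $2m_1$ children end up non-nesting and on the same side) is a bookkeeping point at the same level of informality as the paper itself, so the argument matches.
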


Observe that we can embed $ k' $-trees in $ k $-trees with $ k' < k $.
A queue layout of the $ k $-tree, however, can result in additional restrictions to the queue layout of the embedded $ k' $-tree.
In particular, if every $ \ell $-local queue layout of some $ k $-tree contains a $ k' $-tree having a $ k' $-clique with non-nesting children, we may apply \cref{lem:require-nonnesting} to this clique.
The resulting $ k' $-tree with local queue number at least $ \ell + 1 $ can then be augmented to a $ k $-tree with local queue number at least $ \ell + 1 $.
This leads to the following strengthening of \cref{lem:require-nonnesting}.

\begin{lemma}
    \label{lem:require-nonnesting-subgraph}
    Let $ 1 < k' \leq k $ and $ \ell \leq k' $.
    Assume that for any $ s > 0 $ there is a $ k $-tree $ G $ such that for every $ \ell $-local queue layout, $ G $ contains a $ k' $-clique with at least $ s $ non-nesting children.
    Then, there is a $ k $-tree with local queue number at least $ \ell + 1 $.
\end{lemma}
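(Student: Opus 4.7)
The plan is to combine \cref{lem:require-nonnesting} applied at level $k'$ with a straightforward augmentation step, producing a $k$-tree with local queue number at least $\ell + 1$ in two stages.

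First, I would produce a $k'$-tree $H$ with $\lqn(H) \geq \ell + 1$ by reducing to \cref{lem:require-nonnesting} applied at level $k'$, which is valid since $1 < k'$ and $\ell \leq k'$. For each $s > 0$, the hypothesis supplies a $k$-tree $G_s$ with a $k'$-clique having at least $s$ non-nesting children in every $\ell$-local queue layout. From $G_s$ I would extract a $k'$-tree $H_s \subseteq G_s$ on the same vertex set by fixing a construction ordering of $G_s$ and, for every non-initial vertex $v$ with parent $k$-clique $C_v$, retaining only the edges from $v$ to an arbitrary $k'$-subset $C'_v \subseteq C_v$. Because non-nesting depends only on the vertex ordering, every $\ell$-local queue layout of $G_s$ restricts to an $\ell$-local queue layout of $H_s$ that witnesses the same $k'$-clique with $s$ non-nesting children.

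The main obstacle is the reverse direction: an $\ell$-local queue layout of $H_s$ need not extend to one of $G_s$, so the hypothesis does not automatically transfer to $H_s$. I would overcome this by running the game-theoretic reductions of \cref{lem:games} on $H_s$ directly. Alice's strategy only appeals to combinatorial facts about the vertex ordering Bob chooses, and if Bob's $H_s$-layout avoided any $k'$-clique with $s$ non-nesting children, one could extend it to an $\ell$-local queue layout of $G_s$ by assigning each missing edge $vw \in E(G_s) \setminus E(H_s)$ to a queue already used at both $v$ and $w$; a pigeonhole argument in the spirit of the reduction \game{cond:twinColors} $\rightsquigarrow$ \game{cond:consecutive}, after blowing up $G_s$ in advance to secure enough twin edges, ensures that a compatible queue always exists. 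The extended layout would contradict the hypothesis, so every $\ell$-local queue layout of $H_s$ must already contain the desired $k'$-clique configuration, and \cref{lem:require-nonnesting} at level $k'$ yields the required $k'$-tree $H$.

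Second, I would augment $H$ to a $k$-tree $G$ by adjoining $k - k'$ new vertices $u_1, \dots, u_{k-k'}$, making them pairwise adjacent and adjacent to every vertex of $H$, and placing them at the start of the construction. With construction order $u_1, \dots, u_{k-k'}, v_1, \dots, v_n$, the initial $(k+1)$-clique consists of the $u_i$'s together with $H$'s initial $(k'+1)$-clique, and each subsequent vertex $v_j$ with $k'$-clique parent $C_j$ in $H$ takes $C_j \cup \{u_1, \dots, u_{k-k'}\}$ as its $k$-clique parent in $G$. Then $G$ is a $k$-tree, and since $H$ is an edge-subgraph of $G$, any $\ell$-local queue layout of $G$ restricts to an $\ell$-local queue layout of $H$, yielding $\lqn(G) \geq \lqn(H) \geq \ell + 1$.
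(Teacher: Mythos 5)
Your Stage~2 (augmenting a $k'$-tree by $k-k'$ universal vertices to obtain a $k$-tree and then using monotonicity of $\lqn$) is correct and matches the paper's intent. The problem is Stage~1. To invoke \cref{lem:require-nonnesting} at level $k'$ you need a \emph{stand-alone} $k'$-tree $H_s$ such that \emph{every} $\ell$-local queue layout of $H_s$ has a $k'$-clique with $\geq s$ non-nesting children. The hypothesis only controls $\ell$-local layouts of the ambient $k$-tree $G_s$. Restricting a $G_s$-layout to $H_s$ is fine, but $H_s$ has strictly more $\ell$-local layouts than $G_s$, and the hypothesis says nothing about the extra ones. Your attempted patch --- extend a ``bad'' $H_s$-layout to an $\ell$-local layout of $G_s$ and derive a contradiction --- does not go through: to place a missing edge $vw$ you would need a queue that is already among the $\leq \ell$ queues at $v$ \emph{and} among the $\leq \ell$ queues at $w$ \emph{and} into which $vw$ can be inserted without creating a rainbow. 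None of these are guaranteed; the queue sets at $v$ and $w$ may be disjoint, and even a common queue may already contain an edge nesting with $vw$. Blowing up $G_s$ beforehand changes which graph you take layouts \emph{of}, but it does not help extend a given layout of the subgraph $H_s$ to the supergraph $G_s$, so the pigeonhole idea borrowed from the \game{cond:twinColors}$\rightsquigarrow$\game{cond:consecutive} reduction is not applicable here.

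The paper avoids the issue by never leaving the $k$-tree. It keeps $G_s$, attaches to the relevant $k'$-cliques the $k'$-tree game graph from Alice's strategy, and augments each attached vertex with the $k-k'$ remaining vertices of the ambient $k$-clique so that the whole construction is a $k$-tree. Any $\ell$-local layout of this $k$-tree restricts on the one hand to $G_s$ --- producing, by hypothesis, a $k'$-clique with $\geq s$ non-nesting children --- and on the other hand to the embedded $k'$-tree at that clique, where the layout is precisely one of the ``restricted'' layouts Alice's Game strategy defeats. In short, the ``additional restrictions'' that the ambient $k$-tree imposes on the embedded $k'$-tree are the essential ingredient; your Stage~1 discards them by trying to manufacture an unconstrained $k'$-tree, and that is exactly where the argument breaks.
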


Note that adding $ 2s $ children to a $ k $-clique yields a $ k $-tree that contains a $ \lceil k/2 \rceil $-clique with at least $ s $ non-nesting children.
Thus, \cref{lem:require-nonnesting-subgraph} with $ \ell = k' = \lceil k/2 \rceil $ proves \cref{thm:lqn-ktree-lower}.

\section{Conclusions}
\label{sec:conclusions}

Based on the notions of queue numbers and local covering numbers, we introduced the \emph{local queue number} as a novel graph parameter.
We presented a tool to deal with $ k $-trees which led to the construction of a $ 2 $-tree with local queue number $ 3 $.
This strengthens the lower bound of $ 3 $ for the queue number of $ 2 $-trees due to Wiechert~\cite{treewidthExp}.
It remains open whether there are $ k $-trees with local queue number $ k + 1 $ for $ k > 2 $.
Given \cref{lem:require-nonnesting}, this is equivalent to the existence of $ k $-trees that have a $ k $-clique with non-nesting children for any $ k $-local queue layout.
That is, if every $ k $-tree admits a $ k $-local queue layout, then every $ k $-tree also admits a $ k $-local queue layout such that all children are placed below their parent clique.
As such a vertex placement produces many nesting edges, this does not seem to be a promising strategy.

\begin{question}
    What is the maximum local queue number of treewidth-$k$ graphs?
\end{question}

There is a third parameter that is closely related to queue numbers and local queue numbers.
For the \emph{union queue number} $ \uqn(G) $ of a graph $ G $, we define a \emph{union queue} to be a vertex-disjoint union of queues and then minimize the number of union queues that are necessary to cover all edges of $ G $.
As we minimize the size of the cover, one could consider the union queue number close to the queue number.
Surprisingly, the union queue number is tied to the local queue number and we have a linear upper bound on the union queue number of $ k $-trees.
We refer to~\cite{localPageNumbers} for an analogous proof for local and union page numbers.
This observation has interesting consequences for queue layouts of $ k $-trees.
If the best known upper bound of $ 2^k - 1 $~\cite{treewidthExp} is tight, then there are queue layouts consisting of linearly many union queues but at least exponentially many queues.

On the other hand, \cref{lem:require-nonnesting} might be extendable for $ \ell $-queue layouts with $ \ell > k $.
\Cref{cond:differentColors} is crucial for this as it forbids Bob to introduce new queues.
If Bob may use more than $ k $ queues, however, the given proof fails.
Note that the requirement of a $ k $-clique with non-nesting children is satisfied by a construction presented by Wiechert~\cite[Lemma 10]{treewidthExp}.

Finally, the presented 2-tree also serves as a witness that there are planar graphs with local queue number at least 3.
However, it is open whether this can be improved to 4.

\begin{question}
    Is there a planar graph with local queue number $ 4 $?
\end{question}

\bibliography{references}

\appendix
\section*{Appendix}

\section{Proofs of \cref{thm:mad,thm:gap}}
\label{sec:gap-mad}

The proofs in this section follow the proofs of the analogous theorems on the local page number in \cite{localPageNumbers}.
We start with a proof of \cref{thm:mad} which claims that the local queue number is tied to the maximum average degree.
For bounding the local queue number from below, consider a nonempty subgraph $ H $ of a graph $ G $ of local queue number
$ \lqn(G) = k $. 
Let \calQ denote the set of queues of a $ k $-local queue layout.
For a queue $ Q \in \calQ $, let $ V_Q $ denote the set of vertices that have incident edges in $ Q $.
Heath and Rosenberg~\cite{1queue} show that $ Q $ contains at most $ 2 |V_Q| - 3 $ edges.
For the number of edges of $ H $, we thus have
\begin{equation*}
    \label{eq:edges}
    |E(H)|
    \leq \sum\limits_{Q \in \calQ} (2 \, |V_Q| - 3)
    \leq 2 k |V(H)| - 3 |\calQ|
    \leq 2 \lqn(G) \cdot |V(H)|.
\end{equation*}%
It follows that the local queue number of $ G $ is lower-bounded by $ |E(H)| / (2 |V(H)|) $, i.e.\ by a forth of the average degree, for any nonempty subgraph $ H $.
Hence, we have $ \lqn(G) \geq \mad(G)/4 $.

Perhaps surprisingly, there is also an \emph{upper} bound on the local queue number in terms of the graph's density.
Nash-Williams~\cite{arboricity} proves that any graph $G$ edge-partitions into $k$ forests if and only if
\[
 k \geq \max\left\{ \frac{|E(H)|}{|V(H)|-1} \mid H \subseteq G, |V(H)| \geq 2 \right\}.
\]
The smallest such $k$, the \emph{arboricity} $\arb(G)$ of $G$, thus satisfies $\frac{1}{2}\mad(G) < \arb(G) \leq \frac{1}{2}\mad(G)+1$.
The \emph{star arboricity} $\sa(G)$ of $G$ is the minimum $k$ such that $G$ edge-partitions into $k$ star forests.
Using the covering number framework, Knauer and Ueckerdt~\cite{cover} introduce the corresponding local covering number, the \emph{local star arboricity} $\sa_\ell(G)$, as the minimum $k$ such that $G$ edge-partitions into some number of stars, but with each vertex having an incident edge in at most $k$ of these stars.
It is known that the local star arboricity can be bound in terms of the arboricity as $ \arb(G) \leq \sa_\ell(G) \leq \arb(G)+1 $~\cite{cover}.
To find a suitable queue layout, take an arbitrary spine ordering and an edge-partition into stars.
Observe that no two edges of a star nest, regardless of the vertex ordering.
Together, we obtain $\lqn(G) \leq \sa_\ell(G) \leq \arb(G) + 1 \leq \frac{1}{2}\mad(G) + 2 $.
This concludes the proof of \cref{thm:mad}.

Heath et al.~\cite{compQS} show that for every $ d \geq 3 $ and infinitely many $ n $, there exist $ d $-regular $ n $-vertex graphs with queue number at least $ \Omega(\sqrt{d}n^{1/2 - 1/d}) $.
On the other hand, the maximum average degree of any $ d $-regular graph $ G $ is $ d $.
With \cref{thm:mad}, it follows that $ \lqn(G) \leq d/2 + 2 $, which proofs \cref{thm:gap}.

\section{Games~\labelcref{cond:sisters,cond:diverseChildren}}
\label{sec:games}

\begin{figure}
    \centering
    \includegraphics{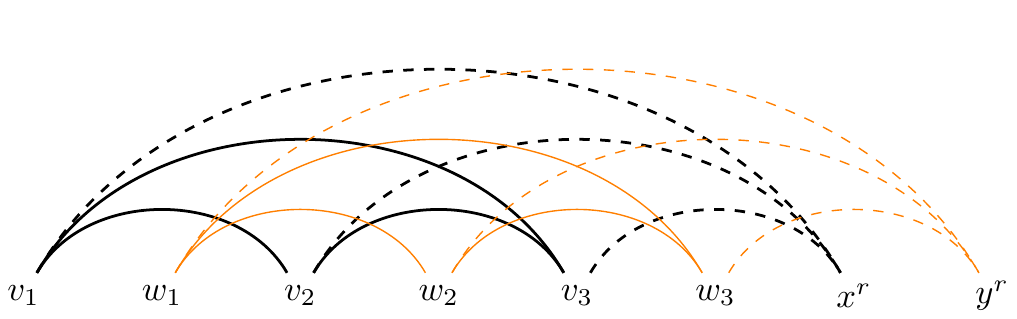}
    \caption{
        Two copies of a clique (black solid for the left graph and orange solid for the right graph) with a child each.
        The child $ y^r $ is a copy of $ x^r $, the vertex $ w_i $ is a copy of $ v_i $, and the edge $ w_i y^r $ is a copy of $ v_i x^r $ for $ i = 1, 2, 3 $.
        In addition, the $ 3 $-cliques containing $ y^r $ are copies of the respective $ 3 $-cliques containing $ x^r $.
    }
    \label{fig:notationCopy}
\end{figure}

We show how Alice wins Games~\labelcref{cond:differentColors,cond:sisters,cond:diverseChildren} for any $ k > 1 $ and $ \ell \leq k $.
Recall that we change the initial setup for the last two games.
See \cref{fig:notationCopy} for an illustration of the upcoming notation.
Instead of starting with only one clique, we start with two cliques $ C_v $ and $ C_w $ with vertices $ v_1 \prec \dots \prec v_k $ and $ w_1 \prec \dots \prec w_k $, respectively.
Without loss of generality, we have $ v_1 \prec w_1 $ and we refer to $ C_v $ as the \emph{left initial clique} and to $ C_w $ as the \emph{right initial clique}.
We proceed on both cliques in parallel and thereby get two copies of the same graph (a \emph{left graph} and a \emph{right graph}).
Consider a vertex $ v $ in the left graph and a vertex $ w $ in the right graph.
We say that $ v $ and $ w $ are \emph{copies} of each other if they are vertices in the respective initial clique having the same position, i.e.\ $ v = v_i $ and $ w = w_i $ for some $ i \in \{1, \dots, k\} $, or if they are introduced in the same round and have the same position with respect to their twin vertices.
That is, if vertices $ x_1^r \prec \dots \prec x_{m_r}^r $ are added to the left graph and vertices $ y_1^r \prec \dots \prec y_{m_r}^r $ are added to the right graph, then $ x_i^r $ and $ y_i^r $ are copies for each $ i \in \{1, \dots, m_r\} $.
Two edges whose endpoints are copies and two $ k $-cliques consisting of copied vertices are also said to be \emph{copies} of each other.
In particular, the two initial cliques are copies.

In the $ r $-th round, Alice now chooses a $ k $-clique in the left graph and its copy in the right graph.
As before, she chooses an integer $ m_r $.
To each of the chosen cliques, we add $ m_r $ new vertices.

\subsubsection{\game{cond:diverseChildren}.}
    \begin{figure}
        \centering
        \includegraphics{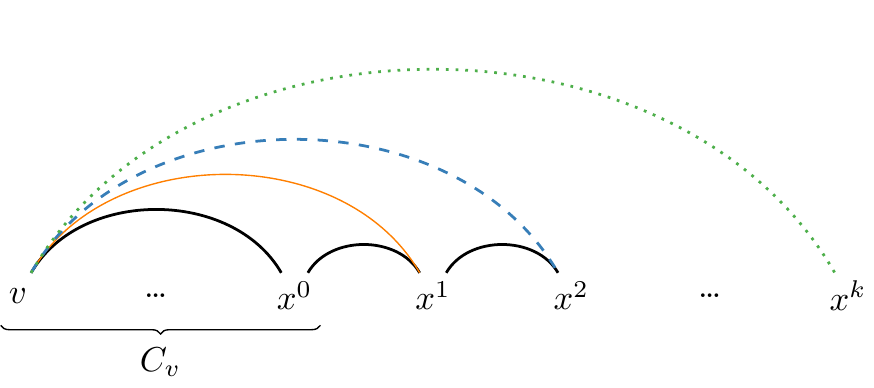}
        \caption{Left graph that Alice constructs to win \game{cond:diverseChildren}. The leftmost vertex $ v $ has incident edges in $ k + 1 $ queues.}
        \label{fig:aliceWins}
    \end{figure}

    We show how Alice wins \game{cond:diverseChildren}.
    Parts of the constructed graph are shown in \cref{fig:aliceWins}.
    Let $ v $ denote the leftmost vertex of the left initial clique $ C_v $.
    Alice adds vertices $ x^1, \dots, x^\ell $ to the left graph in $ \ell $ rounds, and the respective copies to the right graph.
    The rightmost vertex of $ C_v $ is denoted by $ x^0 $.
    In the $ i $-th round, Alice chooses a clique that contains $ v $ and the vertex $ x^{i - 1} $.
    By \cref{cond:sisters}, Bob inserts $ x^i $ to the right of $ x^{i - 1} $ and its copy.
    Now, \cref{cond:diverseChildren} ensures that Bob has to choose pairwise different queues for the edges $ v x^i $, $ i \in \{0, \dots, \ell\} $.
    Hence, $ v $ has incident edges in $ \ell + 1 $ queues and Alice wins.
    
    \gameRed{cond:diverseChildren}{cond:sisters}
    We prove that Alice wins \game{cond:sisters} if Bob does not act as claimed in \cref{cond:diverseChildren}.
    For this, consider an edge $ v_1 v_2 $ and its copy $ w_1 w_2 $, both assigned to some queue $ Q $.
    By \cref{cond:sisters}, we have $ v_1 \prec w_1 \prec v_2 \prec w_2 $ (see \cref{fig:diverseChildren}).
    Now, consider a child $ x $ of $ v_1 $ that is placed to the right of $ w_2 $.
    Since $ v_1 x $ and $ w_1 w_2 $ nest, Bob cannot assign $ v_1 x $ to $ Q $.
    
    \gameRed{cond:sisters}{cond:differentColors}
    Given a strategy to win \game{cond:sisters}, we play \game{cond:differentColors} and show how to apply the same strategy there.
    Note that Alice wins \game{cond:differentColors} if $ \ell < k $ since Bob needs to assign the $ k $ edges between a child and its parent clique to pairwise different queues but may use only $ \ell $ queues at every vertex.
    We thus may assume $ \ell = k $ for \game{cond:differentColors}.
    Recall that the difference between \game{cond:differentColors,cond:sisters} is not only the additional \cref{cond:sisters} but also that we use two initial cliques instead of only one.
    First, we argue why we may start with two cliques and why we may assume that their vertices are laid out alternatingly.
    We start with the initial clique $ \cinit $ of \game{cond:differentColors} whose vertices we denote by $ c_1, \dots, c_k $ and construct two cliques that serve as initial cliques for \game{cond:sisters}.
    For this, Alice creates many cliques that are pairwise vertex-disjoint.
    A clique with vertices $ v_1, \dots, v_k $ is created by choosing $ c_i, \dots, c_k, v_1, \dots, v_{i - 1} $ as parent clique for creating vertex $ v_i $.
    Due to \cref{cond:differentColors}, every queue contains a vertex in \cinit, and thus Bob can use at most $ k^2 $ queues.
    Note that for each clique $ k^2 $ edges are created (including the edges to \cinit) which Bob needs to assign to queues.
    We aim to find two cliques $ C_v $ and $ C_w $ with vertices $ v_1 \prec \dots \prec v_k $, respectively $ w_1 \prec \dots \prec w_k $, such that the edges $ v_i v_j $ and $ w_i w_j $ are assigned to the same queue for $ 1 \leq i < j \leq k $.
    In addition, we want the edges $ cv_i, cw_i $ to \cinit to be assigned to the same queue for each $ c \in V(\cinit) $ and $ i \in \{1, \dots, k\} $.
    By pigeonhole principle, creating more than $ (k^2)^{k^2} $ disjoint cliques yields two such cliques $ C_v $ and $ C_w $.
    
    \begin{figure}
        \centering
        \includegraphics{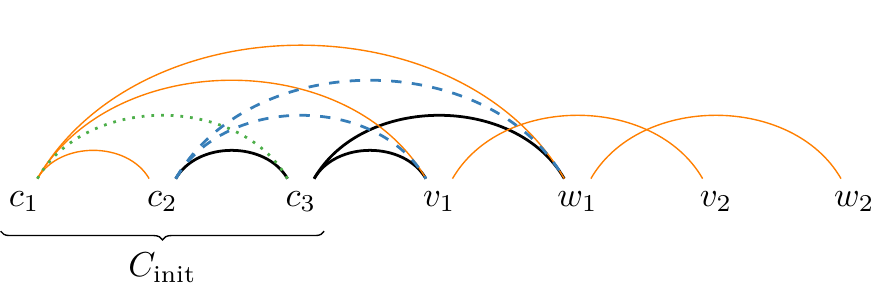}
        \caption{
            The vertices $ v_1 $ and $ w_1 $ have edges to \cinit in $ k $ different queues. 
            Bob has to reuse one of the queues for $ v_1 v_2 $ and $ w_1 w_2 $ and thus has to place $ v_2 $ and $ w_2 $ to the right.
        }
        \label{fig:twoInitialCliques}
    \end{figure}

    Now, we consider the vertex ordering of $ C_v $ and $ C_w $ (see \cref{fig:twoInitialCliques}).
    Without loss of generality, we have $ v_1 \prec w_1 $.
    Between $ w_1 $ and its parent clique, we have $ k $ pairwise different queues (\cref{cond:differentColors}).
    Thus, Bob has to assign $ w_1 w_2 $ (and also $ v_1 v_2 $) to one of these queues to keep the layout $ k $-local.
    Bob now has to place $ v_2 $ to the right of $ w_1 $ since otherwise the edge $ v_1 v_2 $ nests below all edges between $ w_1 $ and its parent clique.
    Since $ v_1 v_2 $ and $ w_1 w_2 $ are in the same queue, Bob has to place $ w_2 $ to the right of $ v_2 $.
    Inductively, we get $ v_1 \prec w_1 \prec v_2 \prec w_2 \prec \dots \prec v_k \prec w_k $.
    
    \begin{figure}
        \centering
        \includegraphics{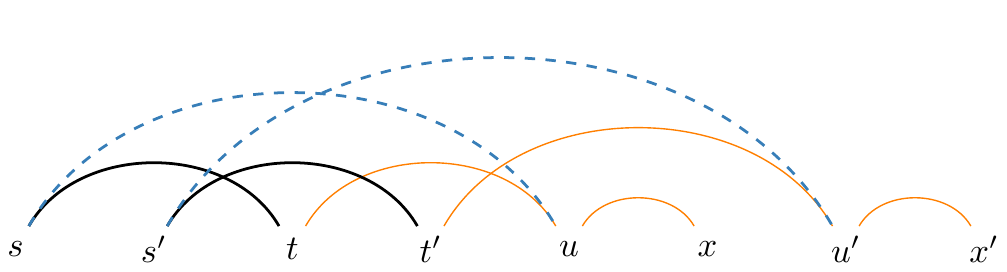}
        \caption{
            Two sister cliques $ C $ ($ s, t, u $) and $ C' $ ($ s', t', u' $) with children $ x $ and $ x' $.
            If Bob places $ x $ below $ C' $, then Alice wins.
        }
        \label{fig:childBelowAunt}
    \end{figure}

    Finally, we consider vertices and edges outside the initial cliques and show that any two cliques that are copies of each other are laid out alternatingly and that each edge is assigned to the same queue as its copy.
    Recall that Bob cannot introduce new queues due to \cref{cond:differentColors}.
    Again, creating sufficiently many copies of the current graph (with the same initial clique \cinit) yields two graphs $ H $ and $ H' $ such that for every edge $ e \in E(H) $, the corresponding edge in $ H' $ is assigned to the same queue as $ e $.
    For the vertex ordering, we continue the induction starting with the alternating initial cliques.
    For this, consider a clique $ C $ and its copy $ C' $ and add a child each (see \cref{fig:childBelowAunt}).
    The rightmost vertex of the cliques are denoted by $ u $ and $ u' $, their children by $ x $ and $ x' $, respectively.
    If Bobs places $ x $ below $ C' $, then $ ux $ nests below all edges between $ u' $ and its parent clique.
    However, $ x $ and $ x' $ have incident edges in the same $ k $ queues, and thus $ ux $ forms a rainbow with an edge in the same queue.
    As $ u'x' $ is a copy of $ ux $, the two edges are assigned to the same queue and thus may not nest.
    Hence, Bob has to place the children in the same order as their parent cliques.

\end{document}